\newtheorem{theorem}{Theorem}[section]
\newtheorem{lemma}[theorem]{Lemma}
\newtheorem{remark}{Remark}[section]
\newtheorem{conjecture}[theorem]{Conjecture}
\newtheorem*{theorem*}{Theorem}
\newtheorem*{corollary*}{Corollary}
\newcommand{\RN}[1]{
\textup{\uppercase\expandafter{\romannumeral#1}}
}
\newcommand{\N}{\rm{N}}
\date{08-10-2020}
\begin{document}
\title[Size of the Fourier coefficients] 
{On the size of the Fourier coefficients of Hilbert cusp forms}
\author{Balesh Kumar}
\address{Department of Mathematics, Indian Institute of Technology Ropar\\
 Rupnagar-140001\\
 Punjab, INDIA}
\email[Balesh Kumar]{baleshkumar@iitrpr.ac.in; baleshmaths@gmail.com}

\subjclass[2010]{11F11, 11F30, 11F41 }
\keywords{Fourier coefficients of modular forms, Hilbert cusp forms, Sato-Tate equidistribution, Diophantine approximation}
\begin{abstract}
Let $\bf f$ be a primitive Hilbert cusp form of weight $k$ and level $\mathfrak{n}$ with Fourier coefficients $c_{\bf f}(\mathfrak{m})$. We prove a non-trivial upper bound for almost all Fourier coefficients $c_{\bf f}(\mathfrak{m})$  of $\bf f$. This generalizes the bounds obtained by Luca, Radziwi\l{}\l{} and Shparlinski. We also prove the existence of infinitely many integral ideals $\mathfrak{m}$ for which the Fourier coefficients $c_{\bf f}(\mathfrak{m})$ have the improved upper bound and further we obtain a refinement of these integral ideals in terms of prime powers. In particular, this enable us to deduce the bound for Fourier coefficients of elliptic cusp forms beyond the `typical size'.
Moreover, we prove further improvements of the bound under the assumption of Littlewood's conjecture. Finally, We study a lower bound for the Fourier coefficients at prime powers provided the corresponding Hecke eigen angle is badly approximable.
\end{abstract}
\maketitle
\section{Introduction}
Let $F$ be a totally real number field of degree $m$ and let $k:=(k_{1},\ldots,k_{m})\in\Bbb Z^{m}$ with $k_{j}>0$, $1\leq j\leq m$. For an integral ideal $\mathfrak{n}$ of $F$, let $\mathfrak{S}_{k}(\mathfrak{n})$ denote the space of all Hilbert cusp forms of weight $k$ and level $\mathfrak{n}$ and $\mathscr{F}_{k}(\mathfrak{n})$ denote the set of all primitive forms of weight $k$ and level $\mathfrak{n}$. Let $\bf f$ be a cusp form in $\mathscr{F}_{k}(\mathfrak{n})$ with  Fourier coefficients ${\N}(\mathfrak{m})^{{(k_{0}-1})/2}c_{\bf f}(\mathfrak{m})$ where $k_{0}:=\max\{k_1,\ldots,k_{m}\}$ and $\mathfrak{m}$ varies over all non-zero integral ideals of $F$. (see section \ref{priliminaries} for the details).

The Fourier coefficients of modular forms are of great interest because of their arithmetic and algebraic information that they encode. One of the classical problem is to study the size of the Fourier coefficients of modular forms. It is well known by a standard argument of Hecke that
$|c_{\bf f}(\mathfrak{m})|\ll{\N}(\mathfrak{m})^{1/2}$. However, by the deep results of Deligne (for $m=1$) and Blasius (for Hilbert cusp forms), we have $|c_{\bf f}(\mathfrak{m})|\leq d(\mathfrak{m})$, where $d(\mathfrak{m})$ is the divisor function of $\mathfrak{m}$.

An interesting question is to investigate the typical size of $c_{\bf f}(\mathfrak{m})$ for almost all $\mathfrak{m}$. Recently, in \cite[Corollary 2]{Luka-Radziwill-Shaperlinski}, Luca, Radziwi\l{}\l{} and Shparlinski obtained a refined version of Deligne's bound for almost all the Fourier coefficients of elliptic cusp forms. They proved the following result: 
\begin{theorem}[Luca, Radziwi\l{}\l{} and Shparlinski] Let $f$ be an elliptic cusp form which is a normalized Hecke eigenform of weight $k$ for the full modular group with Fourier coefficients $c_{f}(n)$. Then
$$
c_{f}(n)\ll (\log n)^{-\frac{1}{2}+\epsilon}
$$
holds for a set of natural numbers $n$ with natural density $1$. 
\end{theorem}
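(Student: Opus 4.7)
The approach I would take is to exploit the multiplicativity of $c_f$ to reduce the estimate $|c_f(n)|\le (\log n)^{-1/2+\epsilon}$ to a Tur\'an--Kubilius concentration bound for the strongly additive function
\[
g(n) \;:=\; \sum_{p\mid n,\; c_f(p)\ne 0}\log|c_f(p)|.
\]
First I would write $c_f(n) = \prod_{p^a\|n} c_f(p^a)$ and observe that, outside a set of density zero, $n$ has a bounded squareful part, so the prime powers $p^a\|n$ with $a\ge 2$ contribute $O(1)$ to $\log|c_f(n)|$. If some $p\mid n$ has $c_f(p)=0$, then $c_f(n)=0$ and the target bound is trivially satisfied, so the problem reduces to showing that $g(n) \le (-\tfrac12 + \epsilon)\log\log n$ on a set of density one.

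Next I would compute the mean and variance of $g$ via Sato--Tate. Writing $c_f(p)=2\cos\theta_p$ with $\theta_p\in[0,\pi]$, the Sato--Tate theorem (now known for non-CM holomorphic newforms, which includes every $f$ of level one) says the angles equidistribute against $d\mu_{ST} = \tfrac{2}{\pi}\sin^2\theta\,d\theta$. Using the Fourier series $\log|2\cos\theta| = \sum_{k\ge 1}(-1)^{k+1}\cos(2k\theta)/k$ together with $\sin^2\theta=(1-\cos 2\theta)/2$ and orthogonality, one computes
\[
\int_0^\pi \log|2\cos\theta|\,d\mu_{ST} = -\tfrac12,\qquad \int_0^\pi (\log|2\cos\theta|)^2\,d\mu_{ST} < \infty.
\]
Partial summation then yields $M(x):=\sum_{p\le x}\log|c_f(p)|/p = -\tfrac12\log\log x + O(1)$ and $B(x):=\sum_{p\le x}(\log|c_f(p)|)^2/p \ll \log\log x$, and the Tur\'an--Kubilius inequality gives
\[
\sum_{n\le x}\bigl(g(n)-M(x)\bigr)^2 \;\ll\; x\,B(x) \;\ll\; x\log\log x.
\]
Chebyshev's inequality then produces the desired density-one concentration of $g(n)$ near $M(x)$, and combined with the squareful reduction above this yields $|c_f(n)|\le(\log n)^{-1/2+\epsilon}$ for a density-one set of $n$.

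The principal technical difficulty is that $\log|2\cos\theta|$ has logarithmic singularities at $\theta=\pi/2$, so qualitative Sato--Tate does not, by itself, imply the two displayed integrals as limits of the relevant prime sums. I would handle this by truncating: remove the primes with $|c_f(p)|<(\log p)^{-A}$ for a suitable constant $A>0$. Since $\mu_{ST}$ has bounded density near $\pi/2$, the Sato--Tate measure of the event $|\theta-\pi/2|<\delta$ is $O(\delta)$; so by an effective form of Sato--Tate---available through the analytic continuation and non-vanishing of symmetric-power $L$-functions attached to $f$---such primes contribute boundedly to both $M(x)$ and $B(x)$ after truncation, and a typical $n\le x$ has essentially no such prime divisors. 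Making this quantitative step precise (and thereby legitimizing the passage from the Sato--Tate integrals to the prime averages) is the main effort of the proof.
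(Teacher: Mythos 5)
Your Tur\'an--Kubilius route is a genuinely different argument from the one in the paper. The paper proves the Hilbert generalization (Theorem~\ref{Hilhalfepsilon}) by a Rankin-type fractional-moment trick: it shows $\sum_{\N(\mathfrak{n})\le x}|c_{\bf f}(\mathfrak{n})|^\gamma \ll x(\log x)^{-\gamma/2+c\gamma^2}$ for small $\gamma>0$ (Lemma~\ref{momentestimate}), using a Hall--Tenenbaum reduction (Lemma~\ref{generalsumesti}) to $\sum_{\N(\mathfrak{n})\le x}|c_{\bf f}(\mathfrak{n})|^\gamma/\N(\mathfrak{n})$ together with the Sato--Tate evaluation $I(\gamma)=\tfrac{2}{\pi}\int_0^\pi|2\cos\theta|^\gamma\sin^2\theta\,d\theta$ and the fact $I'(0)=-\tfrac12$, and then applies Markov's inequality with $\gamma\asymp\epsilon$. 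Both arguments ultimately rest on the same number $\tfrac{2}{\pi}\int_0^\pi\log|2\cos\theta|\,\sin^2\theta\,d\theta=-\tfrac12$, but the fractional-moment route has a decisive technical advantage that completely sidesteps what you correctly identify as ``the main effort'': for $\gamma>0$ the function $|2\cos\theta|^\gamma$ is \emph{bounded}, so qualitative Sato--Tate plus Mertens and partial summation give the needed prime average directly, with no truncation and no appeal to effective Sato--Tate or symmetric-power $L$-functions. In exchange, a concentration argument like yours is what one needs for the finer Erd\H{o}s--Kac normal law (Corollary~5 of [LRS]), which the Rankin trick alone cannot deliver. Two small defects in your sketch: the claim ``if some $p\mid n$ has $c_f(p)=0$, then $c_f(n)=0$'' holds only when $p\parallel n$ exactly, since $c_f(p^2)=c_f(p)^2-1=-1$ when $c_f(p)=0$ (harmless for an upper bound, but as stated it is false); and $\log|c_f(n)|-g(n)$ is not $O(1)$ merely on the set of $n$ with bounded squareful part, because a prime with $p^2\parallel n$ and $|c_f(p)|$ tiny contributes roughly $-\log|c_f(p)|$ to the difference, so one must additionally restrict to $n$ whose square divisors come from a fixed finite set of primes (still density one, but it is an extra reduction your sketch omits).
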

One of the aim of the present article is to generalize the above estimate for Fourier coefficients of Hilbert cusp forms. In the first theorem of the paper, we prove the following:
\begin{theorem}\label{Hilhalfepsilon}
Let $\bf f$ be a primitive cusp form in $\mathscr{F}_{k}(\mathfrak{n})$ of weight $k=(k_{1},\ldots,k_{m})$ such that $k_{1}\equiv\cdots\equiv k_{m}\equiv 0\pmod{2}$ and each $k_{j}\geq 2$. Suppose $\bf f$ does not have complex multiplication and	
let $c_{{\bf f}}(\mathfrak{m})$ be the normalized  coefficients of $\bf f$. Then we have
$$
c_{{\bf f}}(\mathfrak{m})\ll (\log{\N}(\mathfrak{m}))^{-\frac{1}{2}+\epsilon}
$$
for all $\mathfrak{m}$ in the set $\mathcal{S}$ of natural density $1$.
\end{theorem}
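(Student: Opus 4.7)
The plan is to adapt the probabilistic argument of Luca--Radziwi\l{}\l{}--Shparlinski to the Hilbert setting. The key new input is the Sato--Tate equidistribution theorem for primitive non-CM Hilbert cusp forms of parallel even weight, due to the Hilbert generalizations of the work of Barnet-Lamb--Gee--Geraghty--Taylor; the weight hypotheses in the theorem (each $k_j\ge 2$, parallel and even) enter precisely here. Writing $c_{\bf f}(\mathfrak{p})=2\cos\theta_\mathfrak{p}$ for each $\mathfrak{p}\nmid\mathfrak{n}$, Hecke multiplicativity gives $c_{\bf f}(\mathfrak{p}^a)=U_a(\cos\theta_\mathfrak{p})$ where $U_a$ is the Chebyshev polynomial of the second kind, so that $\log|c_{\bf f}|$ is strongly additive on integral ideals, and Sato--Tate asserts that $\{\theta_\mathfrak{p}\}$ is equidistributed on $[0,\pi]$ with respect to $d\mu_{ST}=\frac{2}{\pi}\sin^2\theta\,d\theta$.

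The analytic fact that produces the exponent $-\tfrac{1}{2}$ is the identity
\begin{equation*}
\int_0^\pi \log|U_a(\cos\theta)|\,d\mu_{ST}(\theta) \;=\; -\tfrac{1}{2}\qquad\text{for every }a\ge 1.
\end{equation*}
I would verify it by writing $\log|U_a(\cos\theta)|=\log|\sin((a+1)\theta)|-\log|\sin\theta|$, expanding each piece in its Fourier cosine series $\log|\sin k\theta|=-\log 2-\sum_{n\ge 1}n^{-1}\cos(2nk\theta)$, and checking that when integrated against $\sin^2\theta=\tfrac{1}{2}(1-\cos 2\theta)$ all $a$-dependent terms cancel by orthogonality, leaving exactly $-\tfrac{1}{2}$ independently of $a$.

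The second tool is a Tur\'an--Kubilius variance estimate for integral ideals of $F$. Introduce a slowly growing truncation parameter $T=T(x)$ (say $T=\log\log\log x$), the truncated weight $g_T(\mathfrak{p},a)=\max\{\log|U_a(\cos\theta_\mathfrak{p})|,-T\}$, and the truncated strongly additive function $\lambda_T(\mathfrak{m})=\sum_{\mathfrak{p}^a\|\mathfrak{m}} g_T(\mathfrak{p},a)$. Applying Sato--Tate together with the prime ideal theorem for $F$ evaluates the mean $A_T(x)$ as $-\tfrac{1}{2}\log\log x+O(1)$, while the integrability of $\log^2|2\cos\theta|$ against $d\mu_{ST}$ gives the variance weight $B_T(x)^2:=\sum_{{\N}(\mathfrak{p}^a)\le x}g_T(\mathfrak{p},a)^2{\N}(\mathfrak{p})^{-a}$ as $O(\log\log x)$. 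The Tur\'an--Kubilius inequality for ideals of $F$ then yields
\begin{equation*}
\sum_{{\N}(\mathfrak{m})\le x}|\lambda_T(\mathfrak{m})-A_T(x)|^2 \ll x\log\log x.
\end{equation*}
A Chebyshev step removes an exceptional set of size $o(x)$; since $\log|c_{\bf f}(\mathfrak{m})|\le \lambda_T(\mathfrak{m})$ outside the further $o(1)$-density set of $\mathfrak{m}$ with a prime divisor $\mathfrak{p}$ where $|c_{\bf f}(\mathfrak{p})|<e^{-T}$, we conclude $c_{\bf f}(\mathfrak{m})\ll(\log{\N}(\mathfrak{m}))^{-1/2+\epsilon}$ on a set $\mathcal S$ of natural density $1$.

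The main obstacle is calibrating the truncation: $T$ must be large enough that prime ideals with $|c_{\bf f}(\mathfrak{p})|<e^{-T}$ contribute a density-zero set of ideals (requiring $e^T\gg\log\log x$, i.e.\ $T\gg\log\log\log x$), yet small enough that $B_T(x)^2$ stays bounded by $\log\log x$ rather than the trivial $T^2\log\log x$. The integrability of $\log^2|2\cos\theta|$ against $d\mu_{ST}$ is what makes the two requirements simultaneously satisfiable, and the window $T=\log\log\log x$ comfortably works. Secondary technical points are an effective-enough form of Sato--Tate for sums over $\mathfrak{p}$ with ${\N}(\mathfrak{p})\le x$ and the Tur\'an--Kubilius inequality over the ring of integers of $F$; both appear in the literature but must be invoked uniformly in $x$.
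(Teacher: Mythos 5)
Your proposal is correct in outline, but it follows a genuinely different route from the paper. The paper's proof is a short deduction from a fractional-moment estimate: for $\gamma<1$ it establishes $\sum_{\N(\mathfrak{n})\le x}|c_{\bf f}(\mathfrak{n})|^{\gamma}\le x(\log x)^{-\gamma/2+c\gamma^{2}}$ by combining the Sato--Tate computation of $\sum_{\N(\mathfrak{p})\le x}|2\cos\theta_{\mathfrak{p}}|^{\gamma}/\N(\mathfrak{p})=(I(\gamma)+o(1))\log\log x$ with the expansion $I(\gamma)=1-\gamma/2+O(\gamma^{2})$ and a Hall--Tanenbaum/Shiu-type lemma (its Lemma~\ref{generalsumesti}) to pass from the logarithmic average to the ordinary average; the theorem then follows by a Markov bound and optimizing $\gamma\asymp\epsilon$. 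You instead treat $\log|c_{\bf f}|$ as a truncated additive function, compute its mean $-\tfrac12\log\log x$ and its variance $O(\log\log x)$ via Sato--Tate, and invoke the Tur\'an--Kubilius inequality over ideals followed by Chebyshev. The paper effectively extracts the $-\tfrac12$ from the derivative $I'(0)=\tfrac{2}{\pi}\int_{0}^{\pi}\log|2\cos\theta|\sin^{2}\theta\,d\theta=-\tfrac12$, which is exactly the $a=1$ case of the integral identity you highlight, so the analytic constant comes from the same source in both arguments. Your approach is heavier (it needs an effective Tur\'an--Kubilius over $\mathcal O_{F}$ and a calibrated truncation $T\asymp\log\log\log x$) but proves more: it is the natural starting point for the Gaussian law recorded in the paper's second quoted theorem of Luca--Radziwi\l{}\l{}--Shparlinski, whereas the paper's Hall--Tanenbaum route is more elementary and self-contained, since Lemma~\ref{generalsumesti} is proved from scratch. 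Two small corrections to your write-up: $\lambda_{T}$ as you define it is additive but not strongly additive (the weight $g_{T}(\mathfrak{p},a)$ depends on $a$), so you should either use the general Tur\'an--Kubilius for additive functions or replace $\lambda_{T}$ by its strongly additive truncation and bound the $a\ge 2$ contribution crudely using $|c_{\bf f}(\mathfrak{p}^{a})|\le a+1$, as the paper does in \eqref{ramanujanprimepower}; and the inequality $\log|c_{\bf f}(\mathfrak{m})|\le\lambda_{T}(\mathfrak{m})$ holds unconditionally whenever $c_{\bf f}(\mathfrak{m})\neq 0$ (since $\max(\cdot,-T)\ge\cdot$), so no additional exceptional set needs to be removed there; the calibration of $T$ is needed only to ensure $A_{T}(x)=-\tfrac12\log\log x+O(1)$, i.e.\ $Te^{-T}\log\log x=O(1)$, and to keep the $a\ge 2$ piece of $B_{T}(x)^{2}$, which is $O(T^{2})$, below $\log\log x$.
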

A natural question is whether the exponent $-1/2$ in the above can be improved further for a subset $\mathcal{S}$ of density $1$. In this direction, in \cite[Corollary $5$]{Luka-Radziwill-Shaperlinski}, Luca, Radziwi\l{}\l{} and Shparlinski proved the following result for Fourier coefficients of elliptic cusp forms. 
\begin{theorem}[Luca, Radziwi\l{}\l{} and Shparlinski]
Let $f$ be an elliptic cusp form which is a normalized Hecke eigenform of weight $k$ for the full modular group with Fourier coefficients $c_{f}(n)$. Assume that the symmetric power $L$-functions of $f$ are automorphic and satisfy the Langlands functoriality conjecture.
Then, for fixed $\upsilon\in\Bbb R$, we have
$$
\lim_{x\rightarrow\infty}\frac{\#\bigg\{n\leq x: c_{f}(n)\neq 0, \frac{\log |c_{f}(n)|+\frac{1}{2}\log\log n}{\sqrt{(\frac{1}{2}+\frac{\pi^{2}}{12})\log\log n}}\geq\upsilon\bigg\}}{\#\{n\leq x:c_{f}(n)\neq 0\}}=\frac{1}{\sqrt{2\pi}}\int_{\upsilon}^{\infty}e^{-u^{2}/2} du.
$$
\end{theorem}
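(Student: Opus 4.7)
The strategy is to convert the statement into an Erd\H{o}s--Kac type central limit theorem for the additive function $g(n) := \log|c_f(n)| = \sum_{p^a \| n} \log|c_f(p^a)|$, defined on $\mathcal N := \{n : c_f(n) \neq 0\}$. By a theorem of Serre, $\mathcal N$ has a positive natural density, so the conditioning on $c_f(n) \neq 0$ in the denominator only adjusts the normalization; moreover, a direct estimate shows that the contribution to $g(n)$ from prime powers $p^a \| n$ with $a \geq 2$ is $O(1)$ in an $L^2$-averaged sense and therefore negligible on the scale $\sqrt{\log\log n}$. Thus it suffices to study the squarefree-like partial sum $\sum_{p \mid n,\, p \leq x} \log|c_f(p)|$.

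The next ingredient is to pin down the mean and second moment of the local contribution against the Sato--Tate measure. Writing $c_f(p) = 2\cos\theta_p$ with $\theta_p \in [0,\pi]$ and $d\mu_{\rm ST} = \frac{2}{\pi}\sin^2\theta\, d\theta$, one computes via the Fourier expansion $\log|2\cos\theta| = -\sum_{k \geq 1}(-1)^k \cos(2k\theta)/k$ and orthogonality that
\[
\int_0^\pi \log|2\cos\theta|\, d\mu_{\rm ST} = -\tfrac{1}{2},\qquad \int_0^\pi (\log|2\cos\theta|)^2\, d\mu_{\rm ST} = \tfrac{1}{2}+\tfrac{\pi^2}{12}.
\]
Combined with Mertens' theorem, these identities are exactly what produces the normalizations $-\tfrac{1}{2}\log\log n$ and $(\tfrac{1}{2}+\tfrac{\pi^2}{12})\log\log n$ appearing in the statement. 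The assumed automorphy of every $\mathrm{Sym}^r f$ upgrades Sato--Tate to an \emph{effective} equidistribution statement with a logarithmic power-saving error, which in turn furnishes asymptotic control of any weighted moment $\sum_{p \leq x} (\log|c_f(p)|)^k/p$ to the precision needed below.

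One then carries out the Erd\H{o}s--Kac/Kubilius programme in the style of Granville--Soundararajan: truncate $g(n)$ to primes $p \leq y$ for some $y$ with $\log y = o(\log x)$, use the moment asymptotics above to approximate the truncated function by a sum of independent centred random variables, and deduce the Gaussian law by the method of moments, computing mixed $k$-th moments of $g$ via the Selberg--Delange/Landau machinery. A Tur\'an--Kubilius inequality handles the tail $y < p \leq x$, while Serre's density for $\mathcal N$ reinserts the original conditioning on the denominator.

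The principal obstacle is the logarithmic singularity of $\log|2\cos\theta|$ at $\theta = \pi/2$: because $g(p)$ is unbounded, the classical Erd\H{o}s--Kac framework does not apply verbatim, and one must quantitatively control the frequency of primes with $|\theta_p - \pi/2|$ small. This is precisely where the symmetric power automorphy is indispensable, as it delivers a bound of the form $\#\{p \leq x : |c_f(p)| \leq \delta\} \ll \delta\,\pi(x) + x/(\log x)^A$ uniformly in $\delta$, which is what permits one to truncate $g$ near its singularities, justify the moment computation, and verify that all higher moments of $g(n)/B(x)$ match those of a standard Gaussian.
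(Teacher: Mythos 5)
The paper does not prove this theorem: it is quoted verbatim from \cite[Corollary 5]{Luka-Radziwill-Shaperlinski} and serves only as motivation, explaining why the exponent $-1/2$ in Theorem~\ref{Hilhalfepsilon} cannot be sharpened on a density-$1$ set. There is therefore no argument in the paper against which to check your sketch. For what it is worth, your proposal does correctly identify the structural pillars of the cited Luca--Radziwi\l{}\l{}--Shparlinski argument: reduction to the (nearly) additive function $\log|c_f(n)|$; the Sato--Tate moment computations $\int_0^\pi\log|2\cos\theta|\,\tfrac{2}{\pi}\sin^2\theta\,d\theta=-\tfrac{1}{2}$ and $\int_0^\pi(\log|2\cos\theta|)^2\,\tfrac{2}{\pi}\sin^2\theta\,d\theta=\tfrac{1}{2}+\tfrac{\pi^2}{12}$, which produce the two normalizing constants in the statement; an Erd\H{o}s--Kac/method-of-moments framework; and---the genuinely delicate point---the use of effective equidistribution coming from symmetric-power automorphy to control how often $\theta_p$ falls near the logarithmic singularity of $\log|2\cos\theta|$ at $\pi/2$. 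If you want to turn this sketch into a complete argument you should work from the original source rather than from this paper, which only cites the result.
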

The automorphy of symmetric powers of $f$ have been proved in a recent preprint of Newton and Thorne (see \cite{Newton-Thorne 1}) and hence by now the above theorem is unconditional.
It is clear from the above theorem that the exponent $-1/2$ cannot be improved further for a  density $1$ subset of $\Bbb N$. However, our next theorem is the following:
\begin{theorem}\label{infiniteDiophantine}
Let $\bf f$ be a primitive cusp form in $\mathscr{F}_{k}(\mathfrak{n})$ of weight $k=(k_{1},\ldots,k_{m})$ such that $k_{1}\equiv\cdots\equiv k_{m}\equiv 0\pmod{2}$ and each $k_{j}\geq 2$. Suppose $\bf f$ does not have complex multiplication. Then there exists a positive constant $\Lambda_{{\bf f}}$ depending only on $\bf f$ such that
$$
0<|c_{{\bf f}}(\mathfrak{m})|\leq\frac{\Lambda_{{\bf f}}}{\log{\N}(\mathfrak{m})},  
$$
holds for infinitely many integral ideals $\mathfrak{m}$. 
\end{theorem}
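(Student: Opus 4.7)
The approach will be to construct the infinite family of ideals as prime powers $\mathfrak{m}_q=\mathfrak{p}^{q-1}$ at a single, carefully chosen unramified prime $\mathfrak{p}$, with exponents $q$ produced by Dirichlet's theorem on Diophantine approximation of the Sato--Tate angle $\theta_{\mathfrak{p}}$. The heart of the argument is the Chebyshev identity for normalized Hecke eigenvalues at prime powers,
$$
c_{{\bf f}}(\mathfrak{p}^{r})=\frac{\sin((r+1)\theta_{\mathfrak{p}})}{\sin\theta_{\mathfrak{p}}},
$$
which shows that $c_{{\bf f}}(\mathfrak{p}^{r})$ is small exactly when $(r+1)\theta_{\mathfrak{p}}/\pi$ is close to an integer, while $c_{{\bf f}}(\mathfrak{p}^{r})\ne 0$ as long as $\theta_{\mathfrak{p}}/\pi$ is irrational.

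The first step is to exhibit a prime $\mathfrak{p}\nmid\mathfrak{n}$ with $\alpha_{\mathfrak{p}}:=\theta_{\mathfrak{p}}/\pi$ irrational. Under the hypotheses of the theorem (non-CM, even weights $k_j\ge 2$) Sato--Tate equidistribution for $\bf f$ is available, so the angles $\theta_{\mathfrak{p}}$ equidistribute on $[0,\pi]$ with respect to $\frac{2}{\pi}\sin^{2}\theta\, d\theta$. Since the rational multiples of $\pi$ form a countable set of Sato--Tate measure zero, the primes for which $\theta_{\mathfrak{p}}/\pi\in\mathbb{Q}$ have natural density zero; in particular infinitely many admissible primes exist, and I pick one.

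Next, apply Dirichlet's approximation theorem to the irrational $\alpha_{\mathfrak{p}}$: there exist infinitely many positive integers $q$ and integers $p_{q}$ with $|q\alpha_{\mathfrak{p}}-p_{q}|<1/q$, that is $|q\theta_{\mathfrak{p}}-p_{q}\pi|<\pi/q$. Taking $\mathfrak{m}_{q}=\mathfrak{p}^{q-1}$ and using $|\sin x|\le|x|$ together with $\sin(q\theta_{\mathfrak{p}})=(-1)^{p_{q}}\sin(q\theta_{\mathfrak{p}}-p_{q}\pi)$, the Chebyshev formula yields
$$
|c_{{\bf f}}(\mathfrak{m}_{q})|=\frac{|\sin(q\theta_{\mathfrak{p}}-p_{q}\pi)|}{\sin\theta_{\mathfrak{p}}}\le\frac{\pi}{q\sin\theta_{\mathfrak{p}}},
$$
while irrationality of $\alpha_{\mathfrak{p}}$ gives $c_{{\bf f}}(\mathfrak{m}_{q})\ne 0$. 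Combining with $\log\N(\mathfrak{m}_{q})=(q-1)\log\N(\mathfrak{p})$ and setting $\Lambda_{{\bf f}}:=2\pi\log\N(\mathfrak{p})/\sin\theta_{\mathfrak{p}}$ (the factor $2$ absorbs the replacement of $q-1$ by $q$ for small $q$) produces the claimed bound $0<|c_{{\bf f}}(\mathfrak{m}_{q})|\le\Lambda_{{\bf f}}/\log\N(\mathfrak{m}_{q})$ for infinitely many $q$, and the $\mathfrak{m}_{q}$ are pairwise distinct.

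The only non-elementary ingredient is producing a prime with $\theta_{\mathfrak{p}}/\pi\notin\mathbb{Q}$, where unconditional Sato--Tate for non-CM Hilbert primitive forms is used; this is the main conceptual obstacle, and it is the same analytic input already exploited in Theorem \ref{Hilhalfepsilon}. The rest of the argument reduces to Dirichlet's theorem and the Chebyshev identity, both entirely elementary.
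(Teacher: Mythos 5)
Your construction is correct and in fact simpler than the paper's: you work with powers of a \emph{single} prime, taking $\mathfrak{m}_{q}=\mathfrak{p}^{q-1}$, applying Dirichlet to $\theta_{\mathfrak{p}}/\pi$ and using $\log{\N}(\mathfrak{m}_{q})=(q-1)\log{\N}(\mathfrak{p})$ to convert the Diophantine saving $1/q$ into a $1/\log{\N}(\mathfrak{m}_{q})$ bound. The paper instead fixes \emph{two} distinct primes $\mathfrak{p},\mathfrak{q}$ with irrational angles, first manufactures a positive coefficient $c_{\bf f}(\mathfrak{p}^{r_{1}})>0$ by density of $r\mapsto\sin((r+1)\theta_{\mathfrak{p}})$, and then applies Dirichlet at $\theta_{\mathfrak{q}}$ to the ideals $\mathfrak{a}_{n}=\mathfrak{p}^{r_{1}}\mathfrak{q}^{n-1}$ before estimating $n\geq c\log{\N}(\mathfrak{a}_{n})$. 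The second prime and the auxiliary positive coefficient are not actually needed, so the one-prime argument you give is a genuine streamlining; the Chebyshev identity $c_{\bf f}(\mathfrak{p}^{r})=\sin((r+1)\theta_{\mathfrak{p}})/\sin\theta_{\mathfrak{p}}$, Dirichlet's theorem, and the nonvanishing coming from irrationality of $\theta_{\mathfrak{p}}/\pi$ are used in exactly the same way in both proofs.

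There is, however, a gap in how you produce the prime with $\theta_{\mathfrak{p}}/\pi$ irrational. You argue that the rational multiples of $\pi$ form a countable Sato--Tate null set, hence the primes with $\theta_{\mathfrak{p}}/\pi\in\Bbb Q$ have natural density zero. That inference is not valid: equidistribution with respect to an absolutely continuous measure does \emph{not} imply that the sequence avoids a prescribed null set. For instance, $(\{n\alpha\})_{n\geq 1}$ with $\alpha$ irrational is equidistributed in $[0,1]$ yet lies entirely inside a countable, hence Lebesgue-null, set. A priori the angles $\theta_{\mathfrak{p}}$ could be a dense sequence consisting only of rational multiples of $\pi$, and Sato--Tate alone would not detect this. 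What closes the gap is the algebraic input of Lemma \ref{thetapnotrational}: since $c_{\bf f}(\mathfrak{p})=2\cos\theta_{\mathfrak{p}}$ lies in the fixed Hecke field $\Bbb K$ of finite degree over $\Bbb Q$, any prime with $\theta_{\mathfrak{p}}/(2\pi)=A_{\mathfrak{p}}/B_{\mathfrak{p}}$ rational has $e^{2\pi iA_{\mathfrak{p}}/B_{\mathfrak{p}}}$ satisfying a degree-$4$ polynomial over $\Bbb K$, which forces $\phi(B_{\mathfrak{p}})\leq 4[\Bbb K:\Bbb Q]$ and so confines the rational angles to a \emph{finite} set. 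Only once this finiteness is known can Sato--Tate equidistribution (applied to an interval that avoids these finitely many angles) deliver infinitely many primes with irrational angle. Cite Lemma \ref{thetapnotrational} at this point instead of inferring density zero directly from measure zero; with that substitution your proof is complete, and it improves on the paper's argument by eliminating the second prime.
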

The above theorem is one of the main source of inspiration of this article. Before we state our next theorem, we first define the notion of relative density. Let $\mathscr{P}$ denote the set of all integral prime ideals of $F$. We say that a subset of prime ideals $\wp$ has the relative density $d(\wp)$ if 
$$
d(\wp)=\lim_{x\rightarrow\infty}\frac{\#\{{\N}(\mathfrak{p})\leq x:\mathfrak{p}\in\wp\}}{\#\{{\N}(\mathfrak{p})\leq x:\mathfrak{p}\in\mathscr{P}\}}.
$$
Then we have the following refinement of the above theorem.
\begin{theorem}\label{primepowerdiophantine}
	Let $\bf f$ be a primitive cusp form in $\mathscr{F}_{k}(\mathfrak{n})$ of weight $k=(k_{1},\ldots,k_{m})$ such that $k_{1}\equiv\cdots\equiv k_{m}\equiv 0\pmod{2}$ and each $k_{j}\geq 2$. Suppose $\bf f$ does not have complex multiplication. Then there exists a set $\wp$ of prime ideals of relative densiy 1 such that for each $\mathfrak{p}\in\wp$, there are infinitely many positive integers $n$ with
	\begin{equation}\label{primepowerbound}
	0<|c_{{\bf f}}(\mathfrak{p}^{n})|\leq\frac{\Lambda_{{\bf f}}}{n+1} 
	\end{equation}
	where $\Lambda_{{\bf f}}$ is a positive constant depending only on $\bf f$.
\end{theorem}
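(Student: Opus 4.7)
For $\mathfrak{p}\nmid\mathfrak{n}$, the Ramanujan–Petersson bound (established for Hilbert forms by Blasius) lets me write the Hecke eigenvalue as $c_{\bf f}(\mathfrak{p})=2\cos\theta_\mathfrak{p}$ with $\theta_\mathfrak{p}\in[0,\pi]$, and the multiplicative Hecke recursion on prime powers yields the Chebyshev identity
$$c_{\bf f}(\mathfrak{p}^{n}) \;=\; U_{n}(\cos\theta_\mathfrak{p}) \;=\; \frac{\sin((n+1)\theta_\mathfrak{p})}{\sin\theta_\mathfrak{p}}.$$
So the bound \eqref{primepowerbound} turns into a Diophantine statement: there should be infinitely many $n$ for which $(n+1)\theta_\mathfrak{p}/\pi$ lies within $O(1/(n+1))$ of an integer. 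This is exactly what Dirichlet's approximation theorem delivers, provided $\theta_\mathfrak{p}/\pi$ is irrational.

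\textbf{Step 1 (defining $\wp$ via Sato--Tate).} I would fix a small parameter $\delta_{0}>0$ depending only on $\bf f$, and take
$$\wp \;:=\; \bigl\{\,\mathfrak{p}\nmid\mathfrak{n}\;:\;\theta_\mathfrak{p}\in[\delta_{0},\pi-\delta_{0}]\text{ and }\theta_\mathfrak{p}/\pi\notin\mathbb{Q}\,\bigr\}.$$
To control its relative density I invoke the Sato–Tate equidistribution theorem for non-CM Hilbert modular forms (Barnet-Lamb--Gee--Geraghty), which distributes the $\theta_\mathfrak{p}$ on $[0,\pi]$ against $(2/\pi)\sin^{2}\theta\,d\theta$. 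The complement of $[\delta_{0},\pi-\delta_{0}]$ has Sato–Tate mass $O(\delta_{0}^{3})$, so excluding it is cheap as $\delta_{0}\to0$. The rationality condition is free: the coefficients $c_{\bf f}(\mathfrak{p})$ all lie in the finite-degree Hecke field $K_{\bf f}$, so the angles with $\theta_\mathfrak{p}/\pi\in\mathbb{Q}$ are confined to the finite set $\{\theta\in[0,\pi]:2\cos\theta\in K_{\bf f}\cap\{2\cos(a\pi/b):a,b\in\mathbb{Z}\}\}$, and Sato–Tate assigns mass zero to any finite set of angles.

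\textbf{Step 2 (Dirichlet approximation and Chebyshev).} For each $\mathfrak{p}\in\wp$, set $\alpha_\mathfrak{p}:=\theta_\mathfrak{p}/\pi$, irrational by construction. Dirichlet's theorem furnishes infinitely many coprime pairs $(k,q)$ with $q\geq 1$ and
$$\bigl|\alpha_\mathfrak{p}-k/q\bigr|\;<\;1/q^{2}.$$
Writing $n:=q-1$ and multiplying by $\pi q$ gives $|(n+1)\theta_\mathfrak{p}-k\pi|<\pi/(n+1)$, whence
$$\bigl|\sin\bigl((n+1)\theta_\mathfrak{p}\bigr)\bigr|\;=\;\bigl|\sin\bigl((n+1)\theta_\mathfrak{p}-k\pi\bigr)\bigr|\;\leq\;\frac{\pi}{n+1}.$$
Dividing by $|\sin\theta_\mathfrak{p}|\geq\sin\delta_{0}>0$ (valid throughout $\wp$) and combining with the Chebyshev identity yields
$$0\;<\;|c_{\bf f}(\mathfrak{p}^{n})|\;\leq\;\frac{\pi}{(n+1)\sin\theta_\mathfrak{p}}\;\leq\;\frac{\Lambda_{\bf f}}{n+1},\qquad \Lambda_{\bf f}:=\pi/\sin\delta_{0},$$
for infinitely many $n$, the positivity following from $(n+1)\theta_\mathfrak{p}\notin\pi\mathbb{Z}$, itself a consequence of the irrationality of $\alpha_\mathfrak{p}$.

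\textbf{Main obstacle.} The delicate point is reconciling a \emph{uniform} constant $\Lambda_{\bf f}$ with relative density exactly $1$: the Dirichlet–Chebyshev output carries an unavoidable factor $1/\sin\theta_\mathfrak{p}$ that blows up as $\theta_\mathfrak{p}$ approaches the endpoints of $[0,\pi]$, and cutting out endpoint neighbourhoods costs $O(\delta_{0}^{3})$ in Sato–Tate density. To close this gap to density $1$ one either absorbs this loss into the constant $\Lambda_{\bf f}$ by letting it handle a shrinking endpoint neighbourhood, or else replaces Dirichlet by a Khinchin-type refinement $\|q\alpha\|\ll 1/(q\log q)$ (true for Lebesgue-a.e.\ $\alpha$), whose extra $\log(n+1)$ factor would swallow $1/\sin\theta_\mathfrak{p}$ for all sufficiently large $n$. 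Transferring such finer Diophantine behaviour from Lebesgue measure to the specific sequence $\{\theta_\mathfrak{p}\}$ via Sato–Tate is the genuine arithmetic content, beyond the Chebyshev/Dirichlet skeleton above.
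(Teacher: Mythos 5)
Your core argument—Chebyshev identity $c_{\bf f}(\mathfrak{p}^{n})=\sin((n+1)\theta_{\mathfrak{p}})/\sin\theta_{\mathfrak{p}}$, irrationality of $\theta_{\mathfrak{p}}/\pi$ for a density-one set of primes (via the number-field argument showing rational angles are confined to a finite set), and then Dirichlet's theorem to make $(n+1)\theta_{\mathfrak{p}}$ land within $O(1/(n+1))$ of a multiple of $\pi$—is exactly the paper's proof. The only genuine divergence is your final paragraph. You treat the possible blow-up of $1/|\sin\theta_{\mathfrak{p}}|$ as an obstacle to reconciling density $1$ with a uniform constant, and propose either an endpoint cut (which degrades density to $1-O(\delta_{0}^{3})$) or a Khinchin-type refinement transferred through Sato--Tate. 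But the paper does not attempt a prime-uniform constant at all: its $\wp$ is simply the density-one set of $\mathfrak{p}$ with $\theta_{\mathfrak{p}}/(2\pi)$ irrational, and its $\Lambda_{\bf f}$ is explicitly set to $2\pi/|\sin\theta_{\mathfrak{p}}|$, which depends on $\mathfrak{p}$. In other words, the quantifier order intended in the theorem is \emph{for each} $\mathfrak{p}\in\wp$ there is a constant (which may depend on $\mathfrak{p}$), and the phrase ``depending only on $\bf f$'' is a slight imprecision in the statement rather than something the proof actually delivers. Once you read the theorem this way, your Step 2 already closes the argument with no $\delta_{0}$-cut and no metric number theory: just take $\Lambda_{\bf f,\mathfrak{p}}=\pi/|\sin\theta_{\mathfrak{p}}|$ (finite because $\theta_{\mathfrak{p}}\notin\{0,\pi\}$ when $\theta_{\mathfrak{p}}/\pi$ is irrational), and positivity of $|c_{\bf f}(\mathfrak{p}^{n})|$ follows, as you note, from $(n+1)\theta_{\mathfrak{p}}\notin\pi\mathbb{Z}$. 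So your ``main obstacle'' is a valid critique of the theorem's wording, but it is not a gap in the proof you would actually write; the remedy is to relax the uniformity claim, not to strengthen Dirichlet.
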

It is an interesting question to investigate whether the right-hand side of \eqref{primepowerbound} can be improved further. In this direction, we have the following theorem under the assumption of Littlewood's conjecture.
\begin{theorem}\label{Littlewoodtwoprime}
Let $\bf f$ be a primitive cusp form in $\mathscr{F}_{k}(\mathfrak{n})$ of weight $k=(k_{1},\ldots,k_{m})$ such that $k_{1}\equiv\cdots\equiv k_{m}\equiv 0\pmod{2}$ and each $k_{j}\geq 2$. Suppose $\bf f$ does not have complex multiplication and Conjecture \ref{Littlewood} is true. Then there exists a set $\wp$ of prime ideals of relative density 1 such that for any two distinct prime ideals $\mathfrak{p}$ and $\mathfrak{q}$ in $\wp$, we have $c_{{\bf f}}(\mathfrak{p}^{n}\mathfrak{q}^{n})\neq 0$, for each $n\geq 1$ and
$$
\liminf_{n\rightarrow\infty}\ (n+1)\cdot |c_{{\bf f}}(\mathfrak{p}^{n}\mathfrak{q}^{n})|=0.
$$
Furthermore, if instead of Conjecture \ref{Littlewood}, we assume Conjecture \ref{Velani-Badziahin}. Then there exists a set $\wp$ of prime ideals of relative density 1 such that for any two distinct prime ideals $\mathfrak{p}$ and $\mathfrak{q}$ in $\wp$, we have 
$$
\liminf_{n\rightarrow\infty}\ (n+1)\cdot\log (n+1)\cdot |c_{{\bf f}}(\mathfrak{p}^{n}\mathfrak{q}^{n})|<\infty.
$$
\end{theorem}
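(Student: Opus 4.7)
The plan is to reduce the statement to a purely Diophantine problem about the Hecke angles $\theta_\mathfrak{p}$, after which the two conjectures plug in directly.

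For a primitive non-CM Hilbert cusp form and a prime $\mathfrak{p}\nmid\mathfrak{n}$, the standard Hecke recursion gives
$$c_{\bf f}(\mathfrak{p}^n) \;=\; \frac{\sin((n+1)\theta_\mathfrak{p})}{\sin\theta_\mathfrak{p}},$$
where $\theta_\mathfrak{p}\in(0,\pi)$ is the Satake angle at $\mathfrak{p}$. Since $\mathfrak{p}\neq\mathfrak{q}$, the ideals $\mathfrak{p}^n$ and $\mathfrak{q}^n$ are coprime and multiplicativity of $c_{\bf f}$ yields $c_{\bf f}(\mathfrak{p}^n\mathfrak{q}^n) = c_{\bf f}(\mathfrak{p}^n)\,c_{\bf f}(\mathfrak{q}^n)$. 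Writing $\alpha_\mathfrak{p} := \theta_\mathfrak{p}/\pi$, $\alpha_\mathfrak{q}:=\theta_\mathfrak{q}/\pi$ and using the elementary estimate $|\sin\pi x|\leq\pi\|x\|$, where $\|\cdot\|$ is the distance to the nearest integer, one obtains
$$(n+1)\,|c_{\bf f}(\mathfrak{p}^n\mathfrak{q}^n)| \;\leq\; \frac{\pi^{2}}{\sin\theta_\mathfrak{p}\sin\theta_\mathfrak{q}}\cdot (n+1)\,\|(n+1)\alpha_\mathfrak{p}\|\,\|(n+1)\alpha_\mathfrak{q}\|.$$

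Next, I would take $\wp$ to be the set of primes $\mathfrak{p}\nmid\mathfrak{n}$ with $\theta_\mathfrak{p}/\pi\notin\mathbb{Q}$. By Sato--Tate equidistribution for non-CM Hilbert cusp forms (the same input that underlies Theorem \ref{Hilhalfepsilon}), the exceptional set of primes with $\theta_\mathfrak{p}\in\pi\mathbb{Q}$ is a countable union of Sato--Tate-null sets and hence has relative density zero, so $\wp$ has relative density $1$. For $\mathfrak{p}\in\wp$ and any $n\geq 0$, the number $(n+1)\alpha_\mathfrak{p}$ is not an integer, hence $c_{\bf f}(\mathfrak{p}^n)\neq 0$; combined with the analogous statement for $\mathfrak{q}$, this gives $c_{\bf f}(\mathfrak{p}^n\mathfrak{q}^n)\neq 0$ for every $n\geq 1$.

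For the two $\liminf$ assertions, I would apply the conjectures to the pair $(\alpha_\mathfrak{p},\alpha_\mathfrak{q})\in\mathbb{R}^{2}$. Littlewood's conjecture produces a sequence $N_k\to\infty$ with $N_k\,\|N_k\alpha_\mathfrak{p}\|\,\|N_k\alpha_\mathfrak{q}\|\to 0$; setting $n_k=N_k-1$ and substituting into the displayed bound gives $\liminf_{n\to\infty}(n+1)|c_{\bf f}(\mathfrak{p}^n\mathfrak{q}^n)|=0$. Under the Badziahin--Velani conjecture the same substitution produces a sequence along which $N_k\log N_k\,\|N_k\alpha_\mathfrak{p}\|\,\|N_k\alpha_\mathfrak{q}\|$ remains bounded, yielding the second $\liminf$ statement. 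The main technical input is the Sato--Tate equidistribution used to show that $\wp$ has density $1$; beyond that the argument is essentially trigonometric bookkeeping. The only point that requires some care is to verify that the two conjectures, as invoked here, apply to \emph{every} pair of irrational angles $(\alpha_\mathfrak{p},\alpha_\mathfrak{q})$; if the Badziahin--Velani statement is naturally formulated only for almost every pair, one should shrink $\wp$ further to ensure that the pair falls in the relevant full-measure set of admissible angles.
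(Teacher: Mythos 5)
Your reduction is the same as the paper's: write $c_{\bf f}(\mathfrak{p}^n)=\sin((n+1)\theta_\mathfrak{p})/\sin\theta_\mathfrak{p}$, use multiplicativity for the coprime ideals $\mathfrak{p}^n$, $\mathfrak{q}^n$, bound $|\sin\pi x|\leq\pi\|x\|$, and feed $(n+1)\,\|(n+1)\alpha_\mathfrak{p}\|\,\|(n+1)\alpha_\mathfrak{q}\|$ into Littlewood's (resp.\ Badziahin--Velani's) conjecture applied to the pair of irrational angles. The only difference from the paper is the cosmetic normalization $\theta_\mathfrak{p}/\pi$ in place of $\theta_\mathfrak{p}/(2\pi)$, and you additionally spell out the non-vanishing claim ($(n+1)\alpha_\mathfrak{p}\notin\mathbb{Z}$ because $\alpha_\mathfrak{p}$ is irrational), which the paper leaves implicit.

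There is, however, a gap in your justification that $\wp$ has relative density $1$. You argue that $\{\mathfrak{p}:\theta_\mathfrak{p}\in\pi\mathbb{Q}\}$ is a countable union of Sato--Tate-null sets and therefore has density zero; but natural (or relative) density is only finitely subadditive, so a countable union of density-zero sets of primes need not have density zero. The paper's Lemma~\ref{thetapnotrational} supplies the correct argument: if $\theta_\mathfrak{p}/(2\pi)=A_\mathfrak{p}/B_\mathfrak{p}$ in lowest terms, then $\mu_\mathfrak{p}=e^{2\pi i A_\mathfrak{p}/B_\mathfrak{p}}$ is a primitive $B_\mathfrak{p}$-th root of unity which is algebraic of bounded degree over the (finite-degree) coefficient field of $\bf f$, so $\phi(B_\mathfrak{p})$ is bounded and the set of rational angles --- hence of exceptional primes --- is actually \emph{finite}, not merely a countable union of null sets. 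You should invoke this algebraic finiteness rather than the measure-theoretic heuristic. Also, your closing caveat about Badziahin--Velani possibly holding only for ``almost every pair'' is unnecessary: Conjecture~\ref{Velani-Badziahin}, as stated in the paper and in the cited source, is asserted for every pair of reals, so no further shrinking of $\wp$ is required.
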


Finallly, we prove the following lower bound for $|c_{{\bf f}}(\mathfrak{p}^{n})|$ provided the corresponding Hecke eigen angle at $\mathfrak{p}$ is badly approximable.
\begin{theorem}\label{primepowerlowerthm}
	Let $\bf f$ be a primitive cusp form in $\mathscr{F}_{k}(\mathfrak{n})$ of weight $k=(k_{1},\ldots,k_{m})$ such that $k_{1}\equiv\cdots\equiv k_{m}\equiv 0\pmod{2}$ and each $k_{j}\geq 2$. If the Hecke eigen angle $\theta_{\mathfrak{p}}/\pi$ is badly approximable then for each positive integer $n$, we have 
	\begin{equation}
	|c_{{\bf f}}(\mathfrak{p}^{n})|>\frac{\eta_{{\bf f}}}{n+1}
	\end{equation}
	where $\eta_{\bf f}$ is a positive constant depending only on $\bf f$.
\end{theorem}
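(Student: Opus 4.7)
The plan is to translate the lower bound for $|c_{\bf f}(\mathfrak{p}^n)|$ into a Diophantine statement about the Hecke eigen angle via the Chebyshev identity, and then apply the badly approximable hypothesis directly.

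First I would recall that for a primitive Hilbert cusp form satisfying the Ramanujan--Petersson bound (which holds by Blasius in our setting), the normalized Hecke eigenvalue at a prime ideal $\mathfrak{p}$ can be written as $c_{\bf f}(\mathfrak{p})=2\cos\theta_{\mathfrak{p}}$ for a unique $\theta_{\mathfrak{p}}\in[0,\pi]$. The Hecke recursion $c_{\bf f}(\mathfrak{p}^{n+1}) = c_{\bf f}(\mathfrak{p})\,c_{\bf f}(\mathfrak{p}^{n}) - c_{\bf f}(\mathfrak{p}^{n-1})$ then yields the Chebyshev identity
$$
c_{{\bf f}}(\mathfrak{p}^{n}) \;=\; U_n(\cos\theta_{\mathfrak{p}}) \;=\; \frac{\sin((n+1)\theta_{\mathfrak{p}})}{\sin\theta_{\mathfrak{p}}}.
$$
Since $\theta_{\mathfrak{p}}/\pi$ is badly approximable, in particular it is irrational, so $\sin\theta_{\mathfrak{p}}\neq 0$ and the identity makes sense; moreover $|\sin\theta_{\mathfrak{p}}|\leq 1$.

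Next I would set $\alpha:=\theta_{\mathfrak{p}}/\pi$ and write $(n+1)\theta_{\mathfrak{p}} = \pi\bigl((n+1)\alpha\bigr)$, so that
$$
|\sin((n+1)\theta_{\mathfrak{p}})| \;=\; \sin\bigl(\pi\,\|(n+1)\alpha\|\bigr),
$$
where $\|\cdot\|$ denotes the distance to the nearest integer. The elementary inequality $\sin(\pi t) \geq 2t$ for $0\leq t \leq 1/2$ then gives
$$
|\sin((n+1)\theta_{\mathfrak{p}})| \;\geq\; 2\,\|(n+1)\alpha\|.
$$
By the definition of badly approximable, there is a constant $c=c(\alpha)>0$ such that $\|q\alpha\|\geq c/q$ for every positive integer $q$, hence $\|(n+1)\alpha\|\geq c/(n+1)$ for all $n\geq 0$.

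Combining these and dividing by $|\sin\theta_{\mathfrak{p}}|\leq 1$ produces
$$
|c_{{\bf f}}(\mathfrak{p}^{n})| \;=\; \frac{|\sin((n+1)\theta_{\mathfrak{p}})|}{|\sin\theta_{\mathfrak{p}}|} \;\geq\; \frac{2c}{n+1},
$$
and setting $\eta_{\bf f}:=2c$ finishes the argument. There is essentially no hard obstacle: the whole statement reduces, via the Chebyshev identity, to the tautology that a badly approximable number $\alpha$ keeps $\|(n+1)\alpha\|$ of order $1/(n+1)$. The only points worth being careful about are verifying that the Chebyshev presentation of $c_{\bf f}(\mathfrak{p}^n)$ is valid (so one should cite the Ramanujan bound at each place) and recording that the implicit constant indeed depends only on the form and the chosen prime through the badly approximable constant of $\theta_{\mathfrak{p}}/\pi$.
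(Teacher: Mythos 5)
Your proof is correct and follows essentially the same route as the paper: reduce via the Chebyshev identity $c_{\bf f}(\mathfrak{p}^n)=\sin((n+1)\theta_{\mathfrak{p}})/\sin\theta_{\mathfrak{p}}$ to a lower bound on $|\sin((n+1)\theta_{\mathfrak{p}})|$, then combine the badly approximable hypothesis with a linear lower bound for sine near the origin. The paper phrases the sine estimate by introducing an auxiliary variable $\gamma$ and the bound $|\sin x|\geq (2/\pi)|x|$ for $|x|\leq\pi/2$, whereas you invoke $\sin(\pi t)\geq 2t$ on $[0,1/2]$ directly via $|\sin((n+1)\theta_{\mathfrak{p}})|=\sin\bigl(\pi\,\|(n+1)\theta_{\mathfrak{p}}/\pi\|\bigr)$, but the content is identical.
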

\begin{remark}
The above theorem have various consequences if the Hecke eigen angle $\theta_{\mathfrak{p}}/\pi$ is badly approximable, which we shall discuss in Remark \ref{rmkafterprf}.
\end{remark}
The article is organized as follows. In the next section, we recall the definition and certain properties of Hilbert modular forms which are needed to prove our theorems. We also prove estimates for the moments of the Fourier coefficients of Hilbert cusp forms and discuss the results on Diophantine approximation in this section. Then in the subsequent sections, we prove the theorems.
\section{Priliminaries}\label{priliminaries}
\subsection{Hilbert cusp forms: } In this subsection, we recall the definitions and notations for modular forms associated to a totally real number fields. For the theory of Hilbert modular forms, we refer to \cite{Shimura}.
Let $F$ be a totally real number field of degree $m$, $\mathcal{O}_{F}$ the ring of integers of $F$ and $\mathfrak{D}_{F}^{-1}$ the inverse of the different ideal $\mathfrak{D}_{F}$ of $F$. We denote the set of all fractional ideals of $F$ by $\mathcal{I}$. The norm of an ideal $\mathfrak{a}$ in $\mathcal{O}_{F}$ is defined as ${\N}(\mathfrak{a})=[\mathcal{O}_{F}:\mathfrak{a}]$, the index of $\mathfrak{a}$ in $\mathcal{O}_{F}$. The real embeddings of $F$ are denoted by $\sigma_{i}$ with $\sigma_{i}(x):=x_{i}$, $i=1,\ldots,m$ and $x\in F$. We say an element $x$ in $F$ is totally positive if $x_{i}>0$ for all $i$ and write $x\gg 0$.
We denote the narrow class number of $F$ by $h^{+}$ which is the cardinality of the narrow class group. The narrow class group is the quotient group $\mathcal{I}/\mathcal{P}^{+}$ where $\mathcal{P}^{+}$ is the group of all principal fractional ideals of $F$ generated by totally positive elements. Let ${\rm GL}_{2}(F)$ denote the group of invertible matrices of size $2$ with entries in $F$ and ${\rm GL}_{2}^{+}(F)$ be the subset of ${\rm GL}_{2}(F)$ with totally positive determinant. Let $\Bbb H$ denote the complex upper half plane. Then ${\rm GL}_{2}^{+}(F)$ acts on $\Bbb H^{n}$ by the fractional linear transformation
$$
\begin{pmatrix}
a & b\\
c & d
\end{pmatrix}\cdot (\tau_{1},\ldots,\tau_{m})=\left(\frac{a_{1}\tau_{1}+b_{1}}{c_{1}\tau_{1}+d_{1}},\cdots,\frac{a_{m}\tau_{m}+b_{m}}{c_{m}\tau_{m}+d_{m}}\right).
$$

Next we recall the definition and basic properties of Hilbert modular forms from \cite{Shimura}.
Let $\mathfrak{p}$ be a non-archimedean place of $F$ and $F_{\mathfrak{p}}$ be the completion of $F$ with respect to the valuation defined by $\mathfrak{p}$. For an integral ideal $\mathfrak{c}$ of $F$, one defines a subgroup $K_{\mathfrak{p}}(\mathfrak{c})$ of ${\rm GL}_{2}(F_{\mathfrak{p}})$ as 
$$
K_{\mathfrak{p}}(\mathfrak{c})=
\left\{\begin{pmatrix}
a & b\\
c & d
\end{pmatrix}\in {\rm GL}_{2}(F_{\mathfrak{p}}): 
\begin{matrix}
a\in\mathcal{O}_{F_{\mathfrak{p}}}, & b\in\mathfrak{D}_{F_{\mathfrak{p}}}^{-1},\\ 
c\in\mathfrak{c}\mathfrak{D}_{F_{\mathfrak{p}}}, & d\in\mathcal{O}_{F_{\mathfrak{p}}}, 
\end{matrix}\ |ad-bc|_{\mathfrak{p}}=1\right\},
$$
where the subscript $\mathfrak{p}$ means the $\mathfrak{p}$-part of given ideals in $F_{\mathfrak{p}}$. Moreover, we put
$$
K_{0}(\mathfrak{c})={\rm SO}(2)^{n}\cdot\prod_{\mathfrak{p}<\infty}K_{\mathfrak{p}}(\mathfrak{c})\ {\rm and\ }W(\mathfrak{c})={\rm GL}_{2}^{+}(\Bbb R)^{n}K_{0}(\mathfrak{c}).
$$
Then we have the following decomposition of ${\rm GL}_{2}(\Bbb A_{F})$
\begin{equation}\label{decompose}
{\rm GL}_{2}(\Bbb A_{F})=\cup_{\nu=1}^{h^{+}}{\rm GL}_{2}({F})x_{\nu}^{-\iota}W(\mathfrak{c}),
\end{equation}
where $x_{\nu}^{-\iota}=\begin{pmatrix}
g_{\nu}^{-1} &0\\
0 & 1
\end{pmatrix}$
with $\{g_{\nu}\}_{\nu=1}^{h^+}$ taken to be a complete set of the representatives of the narrow class group of $F$. 
Let $g_{\nu}$ be a representative of the narrow class group. Then one defines a congruence subgroup of ${\rm GL}_{2}(F)$ as 
\begin{eqnarray*}
\Gamma_{\nu}(\mathfrak{c})&=&{\rm GL}_{2}(F)\cap x_{\nu} W(\mathfrak{c})x_{\nu}^{-1}\\
&=&\left\{\begin{pmatrix}
a & g_{\nu}^{-1}b\\
g_{\nu}c & d
\end{pmatrix}\in {\rm GL}_{2}^{+}(F): 
\begin{matrix}
a\in\mathcal{O}_{F}, & b\in\mathfrak{D}_{F}^{-1},\\ 
c\in\mathfrak{c}\mathfrak{D}_{F}, & d\in\mathcal{O}_{F}, 
\end{matrix}\ ad-bc\in\mathcal{O}_{F}^{\times}\right\}.
\end{eqnarray*}
Let $\mathfrak{n}$ be an integral ideal of $F$ and $k:=(k_{1},\ldots,k_{m})\in\Bbb Z_{+}^{m}$. Then we define $M_{k}(\Gamma_{\nu}(\mathfrak{n}))$ to be the space of all complex valued holomorphic functions $f$ on $\Bbb H^{m}$ and at the cusps such that
$$
f\Vert_{k} A(\tau)=f
$$
where
$$
f\Vert_{k} A(\tau)=(\det A)^{k/2}(c\tau+d)^{-k}f(A\cdot\tau),
$$
 $A=\begin{pmatrix}
a & b\\
c & d
\end{pmatrix}\in\Gamma_{\nu}(\mathfrak{n})$, $\tau=(\tau_{1},\ldots,\tau_{m})\in\Bbb H^{m}$ with $(\det A)^{k/2}=\prod_{j=1}^{m}(a_{j}d_{j}-b_{j}c_{j})^{k_{j}/2}$ and $(c\tau+d)^{k}=\prod_{j=1}^{m}(c_{j}\tau_{j}+d_{j})^{k_{j}}$.

A Hilbert modular form $f_{\nu}$ in $M_{k}(\Gamma_{\nu}(\mathfrak{n}))$ has a Fourier expansion of the form
$$
f_{\nu}(\tau)=\sum_{\substack{0\ll\xi\in g_{\nu}\mathcal{O}_{F}\\ \xi=0}}a_{\nu}(\xi)\exp\left(2\pi i\sum_{j=1}^{m}\xi_{j}\tau_{j}\right).
$$
A Hilbert modular form $f$ in $M_{k}(\Gamma_{\nu}(\mathfrak{n}))$ is called a cusp form if for all $\alpha\in{\rm GL}_{2}^{+}(F)$, the constant term in the Fourier expansion of $f\Vert_{k}\alpha$ is $0$.

We write $\bf f$ for a collection $(f_{1},\ldots,f_{h^{+}})$ of Hilbert modular forms $f_{\nu}$ in $M_{k}(\Gamma_{\nu}(\mathfrak{n}))$, $(\nu=1,\dots,h^{+})$ and we identify it with a function on ${\rm GL}_{2}(\Bbb A_{F})$ by
$$
{\bf f}(g)={\bf f}(\alpha x_{\nu}^{-\iota}w)=(f_{\nu}\Vert_{k} w_{\infty})(\bf i)
$$
where $\alpha x_{\nu}^{-\iota}w\in {\rm GL}_{2}({F})x_{\nu}^{-\iota}W(\mathfrak{n})$ as in \eqref{decompose} and ${\bf i}=(i,\ldots,i)$ (with $i=\sqrt{-1}$). The space of such functions $\bf f$ is denoted as 
$$
\mathfrak{M}_{k}(\mathfrak{n})=\prod_{\nu=1}^{h^{+}}M_{k}(\Gamma_{\nu}(\mathfrak{n})).
$$
If each $f_{\nu}$ is a cusp form then the space of such $\bf f$ in $\mathfrak{M}_{k}(\mathfrak{n})$ is denoted by $\mathfrak{S}_{k}(\mathfrak{n})$.
We write $k_{0}:=\max\{k_1,\ldots,k_{m}\}$. One can associate the Fourier coefficients $\{a_{\nu}(\xi)\}$ for each $f_{\nu}$ with the function $\bf f$. Let $\mathfrak{m}$ be an integral ideal of $F$. Then we have $\mathfrak{m}=\xi g_{\nu}^{-1}\mathcal{O}_{F}$ for a unique $\nu$ and some totally positive element $\xi$ in $F$.  For each integral ideal $\mathfrak{m}$ of $F$, we define
$$
C_{{\bf f}}(\mathfrak{m})=\begin{cases}
a_{\nu}(\xi)\xi^{-k/2} {\N}(\mathfrak{m})^{k_{0}/2} & {\rm if }\ \mathfrak{m}=\xi g_{\nu}^{-1}\mathcal{O}_{F}\ {\rm and }\ \mathfrak{m}\ {\rm is\ integral},\\
0 & {\rm if }\ \mathfrak{m}\ {\rm is\ not\ integral}.
\end{cases}
$$
The above expression is independent of the choice of $\xi$ up to totally positive elements in $\mathcal{O}_{F}^{\times}$ and thus makes sense.

We also refer to \cite[Chapter 1,2]{Garrett} and \cite{Ghate, Ghate1} for more details on Hilbert modular forms. The space $\mathfrak{S}_{k}(\mathfrak{n})$ can be decomposed as $\mathfrak{S}_{k}(\mathfrak{n})=\mathfrak{S}_{k}^{old}(\mathfrak{n})\oplus \mathfrak{S}_{k}^{new}(\mathfrak{n})$ where $\mathfrak{S}_{k}^{old}(\mathfrak{n})$ is the subspace of old forms that come from lower levels. The space $\mathfrak{S}_{k}^{new}(\mathfrak{n})$ is the orthogonal complement of $\mathfrak{S}_{k}^{old}(\mathfrak{n})$ with respect to the Petersson scalar product. The space $\mathfrak{S}_{k}(\mathfrak{n})$ is invariant under the action of Hecke operators $\{T_{\mathfrak{m}}\}_{\mathfrak{m}\subset\mathcal{O}_{F}}$.

A Hilbert cusp form $\bf f$ in $\mathfrak{S}_{k}^{new}(\mathfrak{n})$ is said to be primitive if it is a normalized (i.e. $C_{\bf f}(\mathcal{O}_{F})=1$) common Hecke eigenfunctions of all Hecke operators. we refer to \cite[p. 381-82]{Shemanske-Walling}. We denote by $\mathscr{F}_{k}(\mathfrak{n})$ the set of all primitive forms of weight $k$ and level $\mathfrak{n}$. If $\bf f$ is a form in $\mathscr{F}_{k}(\mathfrak{n})$ then it follows from the work of \cite{Shimura} that the coefficients $C_{{\bf f}}(\mathfrak{m})$ are the same as the Hecke eigenvalues of $T_{\mathfrak{m}}$ for all $\mathfrak{m}$. Moreover, the coefficients $C_{{\bf f}}(\mathfrak{m})$ are known to be real numbers for all $\mathfrak{m}$, since the form $\bf f$ is with the trivial character. 

Throughout the paper, we denote by $c_{\bf f}(\mathfrak{m}):=\frac{C_{\bf f}(\mathfrak{m})}{{\N}(\mathfrak{m})^{\frac{k_{0}-1}{2}}}$ the normalized coefficient of $C_{\bf f}(\mathfrak{m})$. Let $\bf f$ be a primitive cusp form in $\mathscr{F}_{k}(\mathfrak{n})$ of weight $k=(k_{1},\ldots,k_{m})$ such that $k_{1}\equiv\cdots\equiv k_{m}\equiv 0\pmod{2}$ and each $k_{j}\geq 2$. Suppose that $\bf f$ does not have complex multiplication.
It is known from \cite{Shimura} that the coefficients $c_{\bf f}(\mathfrak{m})$ satisfy
\begin{eqnarray}\label{multiplicative}
c_{\bf f}(\mathfrak{mn})&=&c_{\bf f}(\mathfrak{m})c_{\bf f}(\mathfrak{n})\ \ \text{\rm if }(\mathfrak{m},\mathfrak{n})=1,\nonumber\\
c_{\bf f}(\mathfrak{p}^{m})&=&c_{\bf f}(\mathfrak{p})c_{\bf f}(\mathfrak{p}^{m-1})-c_{\bf f}(\mathfrak{p}^{m-2})\ \ \text{\rm for }\mathfrak{p}\ \text{\rm a prime ideal},\label{Heckerecursion}\\
|c_{\bf f}(\mathfrak{m})|&\leq& d(\mathfrak{m}),\nonumber
\end{eqnarray}
where $d(\mathfrak{m})$ is the divisor function. The above inequality was proven in \cite{Blasius}. For any prime ideal $\mathfrak{p}$, we define the angle $\theta_{\mathfrak{p}}\in[0,\pi]$ such that 
$$
c_{\bf f}(\mathfrak{p})=2\cos\theta_{\mathfrak{p}}.
$$
It follows from \cite{Blasius} that for a prime ideal $\mathfrak{p}\nmid\mathfrak{n}\mathfrak{D}_{F}$,
$$
c_{\bf f}(\mathfrak{p})=\mu_{\mathfrak{p}}+\overline{\mu_{\mathfrak{p}}}
$$
for some $\mu_{\mathfrak{p}}\in\Bbb C$ with $|\mu_{\mathfrak{p}}|=1$. Using the Hecke relation \eqref{Heckerecursion}, it follows by the induction on $m$ that
\begin{equation}\label{cfpmexpression}
c_{\bf f}(\mathfrak{p}^{m})=\frac{\mu_{\mathfrak{p}}^{m+1}-\overline{\mu_{\mathfrak{p}}}^{m+1}}{\mu_{\mathfrak{p}}-\overline{\mu_{\mathfrak{p}}}}=\frac{\sin (m+1)\theta_{\mathfrak{p}}}{\sin \theta_{\mathfrak{p}}}.
\end{equation}
The above relation will play an important role in proof of our theorems. 
\subsection{Sato-Tate equidistribution} In this subsection, we recall the Sato-Tate equidistribution theorem for Hilbert modular forms of integral weight without CM which will play a crucial role in proof of our theorems. For this, we refer to \cite[Corollary 7.1.7]{Lamb-Gee-Geraghty}.  Here we state the theorem in terms of the Fourier coefficients of primitive Hilbert cusp forms from \cite[Theorem 3.3]{Kaushik-Kumar-Tanabe}.
\begin{theorem}[Sato-Tate for Hilbert cusp forms]\label{Satitatehilbert}
Let $\bf f$ be a primitive cusp form in $\mathscr{F}_{k}(\mathfrak{n})$ of weight $k=(k_{1},\ldots,k_{m})$ such that $k_{1}\equiv\cdots\equiv k_{m}\equiv 0\pmod{2}$ and each $k_{j}\geq 2$. Suppose that $\bf f$ does not have complex multiplication. Then, for any prime ideal $\mathfrak{p}$ of $F$ such that $\mathfrak{p}\nmid{\mathfrak{n}\mathfrak{D}_{F}}$, we have $c_{{\bf f}}(\mathfrak{p})\in [-2,2]$.
Furthermore, $\{c_{\bf f}(\mathfrak{p})\}_{\mathfrak{p}}$ are equidistributed in $[-2,2]$ with respect to the measure $\mu=(2/\pi)\sqrt{1-t^2}dt$. In other words, for any subinterval $I$ of $[-2,2]$, we have
$$
\lim_{x\rightarrow\infty}\frac{\#\{\mathfrak{p}\in\Bbb P:\mathfrak{p}\nmid{\mathfrak{n}\mathfrak{D}_{F}},{\N}(\mathfrak{p})\leq x, c_{\bf f}(\mathfrak{p})\in I\}}{\#\{\mathfrak{p}\in\Bbb P:{\N}(\mathfrak{p})\leq x\}}=\mu(I)=\frac{2}{\pi}\int_{I}\sqrt{1-t^2}dt.
$$
\end{theorem}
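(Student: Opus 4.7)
The plan is to handle the two assertions separately. The inclusion $c_{\bf f}(\mathfrak{p}) \in [-2,2]$ is immediate from the Ramanujan--Petersson bound for Hilbert cusp forms (due to Blasius), which is already invoked in the preceding discussion: writing $c_{\bf f}(\mathfrak{p}) = \mu_{\mathfrak{p}} + \overline{\mu_{\mathfrak{p}}}$ with $|\mu_{\mathfrak{p}}|=1$ and parametrising $\mu_{\mathfrak{p}} = e^{i\theta_{\mathfrak{p}}}$ gives $c_{\bf f}(\mathfrak{p}) = 2\cos\theta_{\mathfrak{p}} \in [-2,2]$. The substantive content is the equidistribution, and my approach is the classical Serre strategy routed through automorphy of symmetric powers.

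By the Weyl equidistribution criterion combined with Stone--Weierstrass, it suffices to check the limiting averages against a dense family of test functions on $[-2,2]$. The irreducible characters of $\mathrm{SU}(2)$, namely the Chebyshev polynomials of the second kind $U_n$, form an orthonormal basis of $L^2([-2,2],\mu)$ and satisfy $\int_{-2}^{2} U_n\, d\mu = 0$ for $n\geq 1$. Using $c_{\bf f}(\mathfrak{p}) = 2\cos\theta_{\mathfrak{p}}$ together with the identity for $c_{\bf f}(\mathfrak{p}^m)$ displayed earlier, one has $U_n(c_{\bf f}(\mathfrak{p})/2) = \sin((n+1)\theta_{\mathfrak{p}})/\sin\theta_{\mathfrak{p}} = c_{\bf f}(\mathfrak{p}^n)$. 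Consequently the theorem reduces to proving, for each fixed $n \geq 1$,
$$
\sum_{\substack{\mathfrak{p}\in\Bbb P,\ \N(\mathfrak{p}) \leq x \\ \mathfrak{p}\nmid\mathfrak{n}\mathfrak{D}_F}} c_{\bf f}(\mathfrak{p}^n) \;=\; o\bigl(\#\{\mathfrak{p}\in\Bbb P:\N(\mathfrak{p}) \leq x\}\bigr),
$$
together with the corresponding trivial statement for $n=0$ by the prime ideal theorem.

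By a standard Wiener--Ikehara argument, this prime-sum estimate follows once the $L$-function $L(s, \mathrm{Sym}^n {\bf f})$ is shown to have holomorphic continuation to $\mathrm{Re}(s) \geq 1$ and to be non-vanishing on the line $\mathrm{Re}(s) = 1$. The main obstacle, and the only deep input, is the automorphy of all symmetric powers of a non-CM Hilbert cusp form. This is precisely the content of the potential automorphy theorem of Barnet--Lamb, Gee, and Geraghty (strengthened to full automorphy by Newton--Thorne); once automorphy of a cuspidal representation on $\mathrm{GL}_{n+1}$ is in hand, entire continuation is automatic and non-vanishing on $\mathrm{Re}(s)=1$ follows from Jacquet--Shalika. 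Hence my proposal is to cite the symmetric-power automorphy result, verify the orthogonality reductions above, and conclude via Wiener--Ikehara; the heavy lifting is thus quarantined in the cited automorphy input.
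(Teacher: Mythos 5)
This is not a statement the paper proves: it is an imported theorem, cited from Barnet--Lamb, Gee and Geraghty (Corollary~7.1.7 of their paper, as the author notes) and restated in the coefficient language following Kaushik--Kumar--Tanabe. The paper contains no argument for it, so there is no ``paper's own proof'' for your sketch to track --- your proposal is instead an attempt to re-derive the cited black box.

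Your outline does capture the standard Serre--Murty strategy: reduce equidistribution via the Weyl criterion to the vanishing of prime averages of the Chebyshev characters $U_n(c_{\bf f}(\mathfrak{p})/2)=c_{\bf f}(\mathfrak{p}^{n})$, then obtain those from analytic control of the symmetric power $L$-functions. But there is a genuine gap in the final step. What Barnet--Lamb, Gee and Geraghty actually prove for non-CM Hilbert cusp forms is \emph{potential} automorphy of the symmetric powers --- automorphy after a finite totally real base change --- not cuspidal automorphy over $F$ itself. Your proposal writes ``once automorphy of a cuspidal representation on $\mathrm{GL}_{n+1}$ is in hand, entire continuation is automatic and non-vanishing on $\mathrm{Re}(s)=1$ follows from Jacquet--Shalika'': that inference is correct but the hypothesis is not available. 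Passing from potential automorphy over extensions to the needed analytic properties of $L(s,\mathrm{Sym}^{n}{\bf f})$ over $F$ requires the Brauer-induction / virtual character argument (as in Taylor and Harris--Shepherd-Barron--Taylor), which expresses the $L$-function as a ratio of products of automorphic $L$-functions over various subfields and then deduces non-vanishing and holomorphy on $\mathrm{Re}(s)\geq 1$; this is the technical heart of the Sato--Tate deduction and it is omitted from your sketch. (The Newton--Thorne reference you mention establishes full symmetric-power automorphy for classical elliptic modular forms over~$\Bbb Q$, which is what the paper invokes in its Introduction for the $m=1$ case; it does not cover general totally real $F$ in the version the paper cites.) The preliminary observation that $c_{\bf f}(\mathfrak{p})\in[-2,2]$ via Blasius's Ramanujan--Petersson bound, and the $U_n$-identity, are both fine. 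If you wish to keep your proposal as a proof sketch rather than a citation, replace the Jacquet--Shalika step with the potential-automorphy-plus-Brauer-induction argument, or simply cite \cite[Corollary 7.1.7]{Lamb-Gee-Geraghty} as the paper does.
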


We also need the information about the distribution of the angles $\theta_{\mathfrak{p}}/2\pi$ that are irrational. In fact, we have the following result.
\begin{lemma}\label{thetapnotrational}
The angles $\theta_{\mathfrak{p}}/2\pi$ that are irrational are equidistributed in the interval $[0,1/2]$.
\end{lemma}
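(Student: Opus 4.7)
My plan is to deduce this lemma directly from Sato-Tate equidistribution (Theorem~\ref{Satitatehilbert}), by transferring the equidistribution statement for $c_{\bf f}(\mathfrak{p}) = 2\cos\theta_{\mathfrak{p}}$ on $[-2,2]$ into one for the angles themselves. The change of variables $t = 2\cos\theta$ sends the Sato-Tate measure on $[-2,2]$ to the measure $(2/\pi)\sin^{2}\theta\, d\theta$ on $[0,\pi]$, and the rescaling $\theta \mapsto \theta/(2\pi)$ turns this into the probability measure $4\sin^{2}(2\pi u)\, du$ on $[0,1/2]$, which is absolutely continuous with respect to Lebesgue measure with density bounded by $4$. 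Equidistribution of the full sequence $\{\theta_{\mathfrak{p}}/(2\pi)\}_{\mathfrak{p}}$ on $[0,1/2]$ with respect to this pushforward measure is then immediate from Theorem~\ref{Satitatehilbert}.

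The substantive content of the lemma is to show that the subset of primes $\mathfrak{p}$ for which $\theta_{\mathfrak{p}}/(2\pi)$ is rational has relative density zero in $\mathscr{P}$; once this is established, deleting a density-zero subsequence cannot alter the limiting distribution, so the irrational angles automatically inherit equidistribution on $[0,1/2]$. To prove the density-zero claim I will exploit absolute continuity of the pushforward measure. For any $\epsilon > 0$, enumerate the countably many rationals in $[0,1/2]$ as $r_{1}, r_{2}, \ldots$ and cover them by open intervals $I_{j} := (r_{j} - \epsilon 2^{-j-1},\, r_{j} + \epsilon 2^{-j-1})$, so that $U_{\epsilon} := \bigcup_{j} I_{j}$ has Lebesgue measure at most $\epsilon$ and pushforward Sato-Tate measure at most $4\epsilon$. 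Approximating $U_{\epsilon}$ from within by finite disjoint unions of subintervals and applying Theorem~\ref{Satitatehilbert} (translated to the angle variable via the change of variables above) to each, one obtains that the set of primes with $\theta_{\mathfrak{p}}/(2\pi) \in U_{\epsilon}$ has upper relative density at most $4\epsilon$. Since every prime with $\theta_{\mathfrak{p}}/(2\pi) \in \Bbb Q$ lies in $U_{\epsilon}$ for every choice of $\epsilon$, such primes form a set of upper relative density zero, hence of relative density zero.

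The main obstacle I anticipate is the subtle measure-theoretic point that a countable union of density-zero sets need not itself have density zero: one cannot simply observe that each singleton $\{r\}$ has Sato-Tate measure zero (and therefore each rational angle contributes a density-zero collection of primes) and then sum over rationals. The covering argument above sidesteps this by exchanging a countable sum of zero contributions for a single uniform density bound that can be driven to zero by the choice of $\epsilon$. The essential analytic input is the absolute continuity of the Sato-Tate measure with respect to Lebesgue measure on $[-2,2]$, which survives the change of variables to $[0,1/2]$ and furnishes the explicit bound $4\epsilon$ used in the argument.
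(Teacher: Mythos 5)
Your argument has a genuine gap, and the issue is precisely the one you flag as an ``obstacle'' --- your covering device does not actually resolve it. The problem is that equidistribution of $\{\theta_{\mathfrak{p}}/(2\pi)\}$ with respect to an absolutely continuous measure $\mu$ controls the relative density of primes landing in a set $S$ only when the boundary of $S$ is $\mu$-null (equivalently, when $S$ is Jordan measurable with respect to $\mu$). Your set $U_{\epsilon}$ is a \emph{dense} open subset of $[0,1/2]$ (it contains every rational), so its closure is all of $[0,1/2]$ and its boundary has full measure. For such a set, equidistribution gives only $\liminf_{x\to\infty}\frac{\#\{\mathfrak{p} : \theta_{\mathfrak{p}}/(2\pi)\in U_{\epsilon}\}}{\#\{\mathfrak{p}\}}\geq\mu(U_{\epsilon})$, with no matching upper bound; indeed the upper relative density could be $1$. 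The step ``approximating $U_{\epsilon}$ from within by finite unions of subintervals'' produces inner sets $V\subset U_{\epsilon}$ with $\mu(V)\leq 4\epsilon$, and Sato--Tate applied to $V$ shows the density of primes in $V$ is small --- but this says nothing about primes in $U_{\epsilon}\setminus V$, which is exactly where all the trouble lives. Absolute continuity and boundedness of the density simply cannot convert ``$U_{\epsilon}$ has small measure'' into ``$U_{\epsilon}$ receives few sample points'' when $U_{\epsilon}$ is dense: a sequence of rationals can be perfectly equidistributed with respect to Lebesgue measure.

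The input you are missing is algebraic, not measure-theoretic, and it is what makes the problem tractable: one shows that the set of \emph{distinct rational values} taken by $\theta_{\mathfrak{p}}/(2\pi)$ is \emph{finite}. Writing $\theta_{\mathfrak{p}}/(2\pi)=A_{\mathfrak{p}}/B_{\mathfrak{p}}$ in lowest terms, $\mu_{\mathfrak{p}}=e^{2\pi i A_{\mathfrak{p}}/B_{\mathfrak{p}}}$ is a primitive $B_{\mathfrak{p}}$-th root of unity, so $[\Bbb Q(\mu_{\mathfrak{p}}):\Bbb Q]=\phi(B_{\mathfrak{p}})$. On the other hand $\mu_{\mathfrak{p}}$ is a root of the explicit quartic $P(z)=z^{4}+(2-c_{\bf f}(\mathfrak{p})^{2})z^{2}+1$ with coefficients in the Hecke field $\Bbb K$ of $\bf f$, which has degree $d=[\Bbb K:\Bbb Q]<\infty$; hence $\phi(B_{\mathfrak{p}})\leq 4d$, forcing $B_{\mathfrak{p}}$ (and therefore the rational $A_{\mathfrak{p}}/B_{\mathfrak{p}}$) into a finite set independent of $\mathfrak{p}$. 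Once the rational angles are confined to finitely many values, each value is an atom-free point of the Sato--Tate measure and hence receives relative density zero, and a \emph{finite} union of density-zero sets is density zero --- the countable-union difficulty you were worried about never arises. Your covering argument, by contrast, was designed to handle countably many exceptional values simultaneously, which is exactly the regime in which equidistribution gives you nothing.
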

\begin{proof}
Proof of the above lemma is based on the known procedure (see \cite{Bengoechea, Murty-Murty}). However we give proof for convenience. 
It is clear from Theorem \ref{Satitatehilbert} that the angles $\theta_{\mathfrak{p}}$ are equidistributed in the interval $[0,\pi]$. Therefore it suffices to show that the angles $\theta_{\mathfrak{p}}/2\pi$ that are rational in fact forms a finite set. Let $\mathfrak{p}$ be a prime ideal such that $\theta_{\mathfrak{p}}/2\pi$ is a rational number. We write $\theta_{\mathfrak{p}}/2\pi= A_{\mathfrak{p}}/B_{\mathfrak{p}}$ where $B_{\mathfrak{p}}\geq 1$, $A_{\mathfrak{p}}<B_{\mathfrak{p}}$ and $(A_{\mathfrak{p}},B_{\mathfrak{p}})=1$. Since $\mu_{\mathfrak{p}}=e^{2\pi i(A_{\mathfrak{p}}/B_{\mathfrak{p}})}$ is a primitive $B_{\mathfrak{p}}$-th root of unity, we have $[\Bbb Q(\mu_{\mathfrak{p}}):\Bbb Q]=\phi(B_{\mathfrak{p}})$, where $\phi$ is the Euler function. On the other hand, $\mu_{\mathfrak{p}}$ is a zero of the polynomial 
\begin{eqnarray*}
P(z)&=&(z-\mu_{\mathfrak{p}})(z-\overline{\mu_{\mathfrak{p}}})(z+\mu_{\mathfrak{p}})(z+\overline{\mu_{\mathfrak{p}}})\\
&=&(z^{2}-c_{\bf f}(\mathfrak{p})z+1)(z^{2}+c_{\bf f}(\mathfrak{p})z+1)=z^{4}+(2-c_{\bf f}(\mathfrak{p})^{2})z^{2}+1.
\end{eqnarray*}
It is known from \cite[Proposition 2.8, p.654]{Shimura} that the field $\Bbb K$ generated by the coefficients of the primitive forms in $\mathscr{F}_{k}(\mathfrak{n})$ is a finite extension over $\Bbb Q$ and let $[\Bbb K:\Bbb Q]=d$. Since the coefficients of the polynomial $P(z)$ is in $\Bbb K$, therefore $[\Bbb Q(\mu_{\mathfrak{p}}):\Bbb Q]\leq 4d$. It follows that $\phi(B_{\mathfrak{p}})\leq 4d$ and thus $B_{\mathfrak{p}}$ belongs to a finite set of integers independent of $\mathfrak{p}$. Since $A_{\mathfrak{p}}<B_{\mathfrak{p}}$, therefore $\theta_{\mathfrak{p}}/2\pi$ belongs to a finite set of rationals. Hence the lemma follows.
\end{proof}

\subsection{Estimates for the moments of the Fourier coefficients of Hilbert cusp forms}
In this subsection, we prove the estimates for the moments of the Fourier coefficients of Hilbert cusp forms which play an important role in proof of Theorem \ref{Hilhalfepsilon}. We first prove the following lemma.
\begin{lemma}\label{generalsumesti}
Let $f:\mathcal{I}\rightarrow\Bbb R$ be a non-negative multiplicative function. Assume that there exist positive constants $A$ and $B$ such that for $x>1$ both inequalities
\begin{eqnarray}
&&\sum_{{\N}(\mathfrak{p})\leq x}f(\mathfrak{p})\log{\N}(\mathfrak{p})\leq A x\ \  \text{ and }\label{sumfplogp}\\ &&\sum_{\mathfrak{p}}\sum_{\alpha\geq 2}\frac{f(\mathfrak{p}^{\alpha})}{{\N}(\mathfrak{p}^{\alpha})}\log({\N}(\mathfrak{p}^{\alpha}))\leq B\label{sumfplogphigher}
\end{eqnarray}
hold. Then we have
\begin{equation}\label{sumfn}
\sum_{{\N}(\mathfrak{n})\leq x}f(\mathfrak{n})\leq (A+B+1)\frac{x}{\log x}\sum_{{\N}(\mathfrak{n})\leq x}\frac{f(\mathfrak{n})}{{\N}(\mathfrak{n})}.
\end{equation}
\end{lemma}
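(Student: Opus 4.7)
The plan is to follow the classical Hall--Tenenbaum strategy for passing from Chebyshev-type information about a non-negative multiplicative function $f$ on prime powers to a mean-value estimate on all ideals. Writing $S(x) := \sum_{\N(\mathfrak{n}) \leq x} f(\mathfrak{n})$, $U(x) := \sum_{\N(\mathfrak{n}) \leq x} f(\mathfrak{n})/\N(\mathfrak{n})$, and $T(x) := \sum_{\N(\mathfrak{n}) \leq x} f(\mathfrak{n}) \log \N(\mathfrak{n})$, the argument will hinge on relating these three quantities.

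First, I will derive the \emph{log-inequality}
\[
(1 + \log x)\, S(x) \;\leq\; T(x) + x\, U(x).
\]
This follows by writing $\log x = \log \N(\mathfrak{n}) + \log(x/\N(\mathfrak{n}))$ for $\N(\mathfrak{n}) \leq x$, summing against $f(\mathfrak{n})$, and applying the elementary bound $\log t \leq t - 1$ to the second factor to obtain $\sum_{\N(\mathfrak{n}) \leq x} f(\mathfrak{n}) \log(x/\N(\mathfrak{n})) \leq x U(x) - S(x)$.

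The second and more technical step is to bound $T(x)$ using the two assumed estimates. Using $\log \N(\mathfrak{n}) = \sum_{\mathfrak{p}^{a}\,\|\,\mathfrak{n},\, a \geq 1} a \log \N(\mathfrak{p})$, the multiplicativity of $f$, and factoring $\mathfrak{n} = \mathfrak{p}^{a} \mathfrak{m}$ with $(\mathfrak{m}, \mathfrak{p}) = 1$, one rewrites
\[
T(x) = \sum_{\substack{\mathfrak{p},\, a \geq 1 \\ \N(\mathfrak{p}^{a}) \leq x}} a\, \log \N(\mathfrak{p}) \cdot f(\mathfrak{p}^{a}) \cdot S^{\ast}_{\mathfrak{p}}\!\left(x/\N(\mathfrak{p}^{a})\right),
\]
where $S^{\ast}_{\mathfrak{p}}(y) := \sum_{\N(\mathfrak{m}) \leq y,\,(\mathfrak{m},\mathfrak{p}) = 1} f(\mathfrak{m})$. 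Bounding $S^{\ast}_{\mathfrak{p}}(y) \leq S(y) \leq y\, U(y) \leq y\, U(x)$ and interchanging summations produces
\[
T(x) \;\leq\; x\, U(x) \sum_{\substack{\mathfrak{p},\, a \geq 1 \\ \N(\mathfrak{p}^{a}) \leq x}} \frac{a\, f(\mathfrak{p}^{a}) \log \N(\mathfrak{p})}{\N(\mathfrak{p}^{a})}.
\]
The inner prime-power sum is then split into the $a = 1$ piece (handled by Abel summation against hypothesis \eqref{sumfplogp}, yielding a contribution $\leq A \log x + A$) and the $a \geq 2$ piece (bounded directly by hypothesis \eqref{sumfplogphigher}, yielding $\leq B$).

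Substituting the resulting bound on $T(x)$ into the log-inequality and dividing by $1 + \log x$ will then give the inequality claimed in the lemma, the factor $1/\log x$ on the right emerging from the denominator $1 + \log x$. The main obstacle will be the precise bookkeeping in Step~2: I need to arrange the Abel summation and the interchange of sums so that the coefficient $A + B + 1$ appears cleanly and so that, after division by $1 + \log x$, the $\log x$-gain is correctly realised. The rest of the argument is of a routine elementary character, relying only on the non-negativity of $f$ and standard summation by parts.
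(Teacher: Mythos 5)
Your Step~1 (the ``log-inequality'' $(1+\log x)S(x)\leq T(x)+xU(x)$) is correct and is essentially the paper's treatment of its sum $\mathcal{S}_1$, just with $\log t\leq t-1$ in place of $\log t<t$. Your handling of the $a\geq 2$ part of $T(x)$ also matches the paper exactly: bound $S^{\ast}_{\mathfrak{p}}(y)\leq yU(x)$, pull out $xU(x)$, and apply \eqref{sumfplogphigher} to what remains.

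The genuine gap is in the $a=1$ piece of Step~2. After bounding $S^{\ast}_{\mathfrak{p}}(x/{\N}(\mathfrak{p}))\leq (x/{\N}(\mathfrak{p}))\,U(x)$ and pulling out $xU(x)$, you are left with
\[
\sum_{{\N}(\mathfrak{p})\leq x}\frac{f(\mathfrak{p})\log {\N}(\mathfrak{p})}{{\N}(\mathfrak{p})},
\]
and as you yourself note, partial summation against \eqref{sumfplogp} shows this is of order $A\log x$, not $O(A)$. Substituting $T(x)\leq (A\log x+A+B)\,xU(x)$ into the log-inequality yields only
\[
S(x)\ \leq\ \frac{A\log x+A+B+1}{1+\log x}\,x\,U(x)\ \sim\ A\,x\,U(x),
\]
which is weaker than the claimed $\displaystyle S(x)\leq (A+B+1)\frac{x}{\log x}U(x)$ by a full factor of $\log x$. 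This is not a bookkeeping issue to be arranged later; the approach of bounding the coprime inner sum crudely by $yU(x)$ for $a=1$ cannot recover the $\log x$-gain, because the weight $f(\mathfrak{p})\log{\N}(\mathfrak{p})/{\N}(\mathfrak{p})$ already sums to $\asymp\log x$.

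The fix, which is what the paper actually does, is to reverse the order of summation in the $a=1$ piece only. Write that piece as
\[
\sum_{{\N}(\mathfrak{m})\leq x} f(\mathfrak{m})\sum_{\substack{{\N}(\mathfrak{p})\leq x/{\N}(\mathfrak{m})\\ \mathfrak{p}\nmid\mathfrak{m}}} f(\mathfrak{p})\log{\N}(\mathfrak{p}),
\]
so that hypothesis \eqref{sumfplogp} is applied directly to the inner sum with $x$ replaced by $x/{\N}(\mathfrak{m})$, giving a bound $A\,x/{\N}(\mathfrak{m})$ for each fixed $\mathfrak{m}$ and hence $A\,xU(x)$ in total, with no extraneous $\log x$. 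Keeping your (correct) treatment of $a\geq2$ then gives $T(x)\leq (A+B)\,xU(x)$, and combined with Step~1 this yields $(1+\log x)S(x)\leq (A+B+1)xU(x)$, which is the claimed inequality (indeed a hair sharper). In short: for $a\geq2$ you may fix the prime power first and sum over $\mathfrak{m}$ crudely, because the tail hypothesis \eqref{sumfplogphigher} is already summable; but for $a=1$ you must fix $\mathfrak{m}$ first and let the Chebyshev-type hypothesis \eqref{sumfplogp} do the work on the inner prime sum.
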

\begin{proof}
	Proof of the lemma follows exactly the similar arguments of \cite[Theorem 01]{Hall-Tanenbaum} or \cite[Lemma 9.6]{Koninck-Luca}. However, we prove it for completeness.
	
	Let $\mathcal{S}(x)$ and $\mathcal{L}(x)$ be the sum which appears on the left-hand side and the right-hand side of \eqref{sumfn} respectively. It is easy to see that $\mathcal{S}(x)\leq x\mathcal{L}(x)$.
	It clearly follows that 
	\begin{eqnarray}\label{sxlogx}
	\mathcal{S}(x)\log x&=&	\sum_{{\N}(\mathfrak{n})\leq x}f(\mathfrak{n})\log x=\sum_{{\N}(\mathfrak{n})\leq x}f(\mathfrak{n})\log \left(\frac{x}{{\N}(\mathfrak{n})}\right)+\sum_{{\N}(\mathfrak{n})\leq x}f(\mathfrak{n})\sum_{\mathfrak{p}\parallel\mathfrak{n}}\log {\N}(\mathfrak{p})\nonumber\\
	&+&\sum_{{\N}(\mathfrak{n})\leq x}f(\mathfrak{n})\sum_{\substack{\alpha\geq 2\\ \mathfrak{p}^{\alpha}\parallel\mathfrak{n}}}\log({\N}(\mathfrak{p}^{\alpha})):=\mathcal{S}_{1}+\mathcal{S}_{2}+\mathcal{S}_{3},\ \ (\text{ say}).
	\end{eqnarray}
	We estimate the sums $\mathcal{S}_{1}$, $\mathcal{S}_{2}$ and $\mathcal{S}_{3}$ separately.
	From the inequality $\log \left(\frac{x}{{\N}(\mathfrak{n})}\right)<\frac{x}{{\N}(\mathfrak{n})}$ for all integral ideals $\mathfrak{n}$ with ${\N}(\mathfrak{n})\leq x$, we see that
	\begin{equation}\label{s1}
	\mathcal{S}_{1}\leq \sum_{{\N}(\mathfrak{n})\leq x}f(\mathfrak{n}) \frac{x}{{\N}(\mathfrak{n})}=x\mathcal{L}(x).
	\end{equation}
	Next we estimate the sum $\mathcal{S}_{2}$. Let $\mathfrak{n}$ be an integral ideal that appears in the sum $\mathcal{S}_{2}$. Then writing $\mathfrak{n}=\mathfrak{m}\mathfrak{p}$ with  $(\mathfrak{p},\mathfrak{m})=1$ in $\mathcal{S}_{2}$, changing the order of summation and using inequality \eqref{sumfplogp}, we get
	\begin{equation}\label{s2}
	\mathcal{S}_{2}=\sum_{{\N}(\mathfrak{m})\leq x}f(\mathfrak{m})\sum_{\substack{{\N}(\mathfrak{p})\leq x/{{\N}(\mathfrak{m})}\\ \mathfrak{p}\nmid\mathfrak{m}}}f(\mathfrak{p})\log {\N}(\mathfrak{p})\leq A\sum_{{\N}(\mathfrak{m})\leq x}f(\mathfrak{m})\frac{x}{{\N}(\mathfrak{m})}=Ax\mathcal{L}(x).
	\end{equation}
	In order to estimate $\mathcal{S}_{3}$, for each $\alpha\geq 2$ and $\mathfrak{p}^{\alpha}\!\!\parallel\!\!\mathfrak{n}$, we write $\mathfrak{n}=\mathfrak{m}\mathfrak{p}^{\alpha}$ with $(\mathfrak{p},\mathfrak{m})=1$ in $\mathcal{S}_{3}$,changing the order of the summation and using inequality \eqref{sumfplogphigher}, we get
	\begin{eqnarray}
	\mathcal{S}_{3}&=&\sum_{\mathfrak{p}}\sum_{\alpha\geq 2}{f(\mathfrak{p}^{\alpha})}\log({\N}(\mathfrak{p}^{\alpha}))\sum_{\substack{{\N}(\mathfrak{m})\leq x/{{\N}(\mathfrak{p}^{\alpha})}\\ \mathfrak{p}\nmid\mathfrak{m}}}f(\mathfrak{m})\nonumber\\
	&\leq&\sum_{\mathfrak{p}}\sum_{\alpha\geq 2}{f(\mathfrak{p}^{\alpha})}\log({\N}(\mathfrak{p}^{\alpha}))\mathcal{S}\left(\frac{x}{{\N}(\mathfrak{p}^{\alpha})}\right)\nonumber\\
	&\leq&\sum_{\mathfrak{p}}\sum_{\alpha\geq 2}{f(\mathfrak{p}^{\alpha})}\log({\N}(\mathfrak{p}^{\alpha}))\cdot\frac{x}{{\N}(\mathfrak{p}^{\alpha})}\cdot\mathcal{L}\left(\frac{x}{{\N}(\mathfrak{p}^{\alpha})}\right)\nonumber\\
	&\leq&x\mathcal{L}(x)\sum_{\mathfrak{p}}\sum_{\alpha\geq 2}\frac{f(\mathfrak{p}^{\alpha})}{{\N}(\mathfrak{p}^{\alpha})}\log({\N}(\mathfrak{p}^{\alpha}))\nonumber\\
	&\leq&B x \mathcal{L}(x).\label{s3}
	\end{eqnarray}
	Inequality \eqref{sumfn} now follows from \eqref{s1}, \eqref{s2} and \eqref{s3} and hence proof is done.
\end{proof}
Now, we prove the lemma which will be used to prove Theorem \ref{Hilhalfepsilon}.
\begin{lemma}\label{momentestimate}
	Let $\bf f$ be a primitive cusp form in $\mathscr{F}_{k}(\mathfrak{n})$ of weight $k=(k_{1},\ldots,k_{m})$ such that $k_{1}\equiv\cdots\equiv k_{m}\equiv 0\pmod{2}$ and each $k_{j}\geq 2$. Suppose that $\bf f$ does not have complex multiplication and	
	let $c_{{\bf f}}(\mathfrak{m})$ be the normalized  coefficients of $\bf f$. Then as $x\rightarrow\infty$, we have
	\begin{eqnarray}
	&&\sum_{{\N}(\mathfrak{n})\leq x}\frac{|c_{\bf f}(\mathfrak{n})|}{{\N}(\mathfrak{n})}\ll (\log x)^{0.85};\label{sumcfn1}\\
	&&\sum_{{\N}(\mathfrak{n})\leq x}|c_{\bf f}(\mathfrak{n})|^{2}\leq x(\log x)^{o(1)};\label{sumcfn2}\\
	&&\sum_{{\N}(\mathfrak{n})\leq x}|c_{\bf f}(\mathfrak{n})|^{\gamma}\leq x(\log x)^{-\gamma/2+c\gamma^{2}}\label{sumcfngama}
	\end{eqnarray}
	where $\gamma<1$ and $c>0$ is an absolute constant.
\end{lemma}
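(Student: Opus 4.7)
The plan is to reduce all three bounds to the single Dirichlet-type quantity $L_\gamma(x) := \sum_{\N(\mathfrak{n}) \leq x} |c_{{\bf f}}(\mathfrak{n})|^\gamma/\N(\mathfrak{n})$ for suitable $\gamma$, and then to control $L_\gamma(x)$ by combining an Euler-product expansion with the Sato--Tate equidistribution of Theorem \ref{Satitatehilbert}. For \eqref{sumcfn1} the quantity $L_1(x)$ is exactly what is being bounded; for \eqref{sumcfn2} and \eqref{sumcfngama} I would first apply Lemma \ref{generalsumesti} to the non-negative multiplicative function $f_\gamma(\mathfrak{n}) := |c_{{\bf f}}(\mathfrak{n})|^\gamma$ to obtain $\sum_{\N(\mathfrak{n}) \leq x}|c_{{\bf f}}(\mathfrak{n})|^\gamma \ll (x/\log x) L_\gamma(x)$. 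The hypotheses \eqref{sumfplogp} and \eqref{sumfplogphigher} of Lemma \ref{generalsumesti} follow at once from the trivial bound $|c_{{\bf f}}(\mathfrak{p})| \leq 2$, the Deligne--Blasius bound $|c_{{\bf f}}(\mathfrak{p}^\alpha)| \leq \alpha + 1$, and the prime ideal theorem for $F$.

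The core computation is that of $L_\gamma(x)$. By multiplicativity and Euler-product expansion (with the prime-power terms absorbed into $O(\N(\mathfrak{p})^{-2})$ using Deligne--Blasius),
\[
L_\gamma(x) \ll \exp\!\left(\sum_{\N(\mathfrak{p}) \leq x} \frac{|c_{{\bf f}}(\mathfrak{p})|^\gamma}{\N(\mathfrak{p})}\right).
\]
Applying Theorem \ref{Satitatehilbert} to the continuous test function $t \mapsto |t|^\gamma$ on $[-2,2]$, together with the prime ideal theorem for $F$ and partial summation, would yield
\[
\sum_{\N(\mathfrak{p}) \leq x} \frac{|c_{{\bf f}}(\mathfrak{p})|^\gamma}{\N(\mathfrak{p})} = \kappa_\gamma \log\log x + o(\log\log x), \qquad \kappa_\gamma := \int_{-2}^{2} |t|^\gamma\, d\mu(t),
\]
so that $L_\gamma(x) \ll (\log x)^{\kappa_\gamma + o(1)}$ and consequently $\sum_{\N(\mathfrak{n}) \leq x}|c_{{\bf f}}(\mathfrak{n})|^\gamma \ll x(\log x)^{\kappa_\gamma - 1 + o(1)}$.

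It would then remain to evaluate $\kappa_\gamma$ in the three regimes. Substituting $t = 2\cos\theta$ turns $d\mu$ into $(2/\pi)\sin^2\theta\, d\theta$ on $[0,\pi]$, and a direct computation would give $\kappa_1 = 8/(3\pi) < 0.85$ (so \eqref{sumcfn1} holds with room to absorb the $o(1)$) and $\kappa_2 = 1$ (so \eqref{sumcfn2} holds, the $o(1)$ fitting inside $(\log x)^{o(1)}$). For \eqref{sumcfngama} I would Taylor expand around $\gamma = 0$: clearly $\kappa_0 = 1$, and differentiation under the integral together with the Fourier series $\log|2\cos\theta| = -\sum_{n\geq 1}(-1)^n \cos(2n\theta)/n$ followed by orthogonality would give $\kappa'_0 = \int_{-2}^{2}\log|t|\,d\mu(t) = -\tfrac{1}{2}$, while $|\kappa''_\gamma|$ stays uniformly bounded on $[0,1]$ because the extra factor $\log^2|t|$ is integrable against $d\mu$. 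This yields $\kappa_\gamma - 1 \leq -\gamma/2 + c\gamma^2$ with an absolute constant $c$, delivering \eqref{sumcfngama}.

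The hard part is precisely the identity $\kappa'_0 = -1/2$: this is the analytic input that produces the typical-size exponent $-1/2$ underlying the entire paper, and without it the leading $-\gamma/2$ in \eqref{sumcfngama} would be invisible. The qualitative nature of Theorem \ref{Satitatehilbert} (no effective rate) is otherwise entirely adequate, as the $o(1)$ slack is absorbed by the gap $0.85 - 8/(3\pi)$ in \eqref{sumcfn1}, by the $(\log x)^{o(1)}$ exponent in \eqref{sumcfn2}, and by the $c\gamma^2$ correction in \eqref{sumcfngama}.
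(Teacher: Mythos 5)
Your proposal is correct and follows essentially the same route as the paper: bound the Dirichlet sum $L_\gamma(x)$ via an Euler product, feed in the Sato--Tate equidistribution and a Mertens-type estimate to get $L_\gamma(x)\ll(\log x)^{\kappa_\gamma+o(1)}$ with $\kappa_\gamma=I(\gamma)$, then pass to the unweighted sums via Lemma~\ref{generalsumesti}; your values $\kappa_1=8/(3\pi)\approx 0.8488$, $\kappa_2=1$, $\kappa'_0=-1/2$ agree with the paper's $I(1)$, $I(2)$, $I'(0)$. The only difference is cosmetic: you spell out the derivation of $\kappa'_0=-1/2$ via the Fourier expansion of $\log|2\cos\theta|$, whereas the paper simply asserts it.
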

\begin{proof}
	It follows from Theorem \ref{Satitatehilbert} that $c_{\bf f}(\mathfrak{p})=2\cos\theta_{\mathfrak{p}}$ with the angles $\theta_{\mathfrak{p}}\in[0,\pi]$. Also, the coefficients $c_{\bf f}(\mathfrak{p^{a}})$ for $a\geq 2$ satisfy the Ramanujan-Petersson bound (proved by Blasius \cite{Blasius}), 
	\begin{equation}\label{ramanujanprimepower}
	c_{\bf f}(\mathfrak{p}^{a})\leq (a+1)\leq {\N}(\mathfrak{p})^{(a-1)/2}(\log {\N}(\mathfrak{p}))^{-1-\rho}
	\end{equation}
	where $\rho>0$ is a constant. Using the multiplicativity of $c_{\bf f}(\mathfrak{n})$ and by the bound \eqref{ramanujanprimepower}, we get for any $\gamma\in[0,2]$,
	\begin{eqnarray}
	\sum_{{\N}(\mathfrak{n})\leq x}\frac{|c_{\bf f}(\mathfrak{n})|^{\gamma}}{{\N}(\mathfrak{n})}&\leq&\prod_{{\N}(\mathfrak{p})\leq x}\left(1+\frac{|c_{\bf f}(\mathfrak{p})|^{\gamma}}{{\N}(\mathfrak{p})}+\frac{|c_{\bf f}(\mathfrak{p}^{2})|^{\gamma}}{{\N}(\mathfrak{p}^{2})}+\cdots\right)\nonumber\\
	&=&\prod_{{\N}(\mathfrak{p})\leq x}\left(1+\frac{|2\cos\theta_{\mathfrak{p}}|^{\gamma}}{{\N}(\mathfrak{p})}+O\left(\frac{1}{{\N}(\mathfrak{p})(\log {\N}(\mathfrak{p}))^{1+\rho}}\right)\right)\nonumber\\
	&=&\exp\left(\sum_{{\N}(\mathfrak{p})\leq x}\frac{|2\cos\theta_{\mathfrak{p}}|^{\gamma}}{{\N}(\mathfrak{p})}+O\left(\sum_{{\N}(\mathfrak{p})\geq 2}\frac{1}{{\N}(\mathfrak{p})(\log {\N}(\mathfrak{p}))^{1+\rho}}\right)\right)\nonumber\\
	&=&\exp\left(\sum_{{\N}(\mathfrak{p})\leq x}\frac{|2\cos\theta_{\mathfrak{p}}|^{\gamma}}{{\N}(\mathfrak{p})}+O(1)\right).\label{sumgamaestimate}
	\end{eqnarray}
	By the partial summation and Mertens formula \cite[Proposition 2, p. 339]{Lebacque}, we have by the Sato-Tate distribution (see Theorem \ref{Satitatehilbert}),
	\begin{equation}
	\sum_{{\N}(\mathfrak{p})\leq x}\frac{|2\cos\theta_{\mathfrak{p}}|^{\gamma}}{{\N}(\mathfrak{p})}=\left(\frac{2}{\pi}\int_{0}^{\pi}|2\cos\theta|^{\gamma}\sin^{2}\theta d\theta+o(1)\right)\log\log x
	\end{equation}
	as $x\rightarrow\infty$. Let 
	$$
	I(\gamma)=\frac{2}{\pi}\int_{0}^{\pi}|2\cos\theta|^{\gamma}\sin^{2}\theta d\theta.
	$$
	If $\gamma=1$ then the above integral becomes $0.848826\ldots$. Thus from \eqref{sumgamaestimate}, we see that the estimate \eqref{sumcfn1} follows. If $\gamma=2$ then the above integral equals 1. Furthermore, since $I^{'}(0)=-1/2$, we see that for $\gamma<1$
	$$
	I(\gamma)\leq 1-\frac{\gamma}{2}+O(\gamma^{2}).
	$$
	It follows from the above estimate that
	\begin{equation}\label{cfnratioestimate}
	\sum_{{\N}(\mathfrak{n})\leq x}\frac{|c_{\bf f}(\mathfrak{n})|^{2}}{{\N}(\mathfrak{n})}\leq(\log x)^{1+o(1)}\ \  \text{ \rm and }\ \sum_{{\N}(\mathfrak{n})\leq x}\frac{|c_{\bf f}(\mathfrak{n})|^{\gamma}}{{\N}(\mathfrak{n})}\leq(\log x)^{1-\frac{\gamma}{2}+c\gamma^{2}}
	\end{equation}
	where $c>0$ is an absolute constant.
	
	It can be seen that the functions $f(\mathfrak{n})=|c_{\bf f}(\mathfrak{n})|^{2}$ and $f(\mathfrak{n})=|c_{\bf f}(\mathfrak{n})|^{\gamma}$ satisfy the conditions of the Lemma \ref{generalsumesti} with some suitable constants $A$ and $B$. Hence, from \eqref{cfnratioestimate} and Lemma \ref{generalsumesti}, we deduce \eqref{sumcfn2} and \eqref{sumcfngama}. This completes  proof of the lemma.
\end{proof}

\subsection{Diophantine approximation} In this subsection, we discuss the results from the theory of homogeneous Diophantine approximation.
We first recall the fundamental result of Dirichlet in the Diophantine approximation which play a crucial role in proof of our theorems. 
\begin{lemma}[Dirichlet]\label{Dirichlet}
For any irrational number $\theta$, we have
$$
\parallel m\theta\parallel<1/m,
$$
for infinitely many positive integers $m$. Here $\parallel\! x\!\parallel$ denotes the distance of $x$ to the nearest integer. 
\end{lemma}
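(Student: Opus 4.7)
The plan is to prove Dirichlet's approximation theorem by the classical pigeonhole argument and then bootstrap from ``one $m$'' to ``infinitely many $m$'' using the irrationality of $\theta$.

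First I would fix a positive integer $N$ and consider the $N+1$ real numbers $\{j\theta\}$ for $j=0,1,\ldots,N$, where $\{x\}$ denotes the fractional part. These all lie in $[0,1)$. Partition $[0,1)$ into the $N$ half-open subintervals $[i/N,(i+1)/N)$ for $0 \leq i \leq N-1$. By the pigeonhole principle, two of the $N+1$ fractional parts lie in the same subinterval, say $\{j_1\theta\}$ and $\{j_2\theta\}$ with $0 \leq j_1 < j_2 \leq N$. Setting $m = j_2 - j_1$, one has $1 \leq m \leq N$ and
\[
\|m\theta\| \;\leq\; |\{j_2\theta\} - \{j_1\theta\}| \;<\; \frac{1}{N} \;\leq\; \frac{1}{m}.
\]
This produces, for every $N \geq 1$, at least one positive integer $m \leq N$ satisfying $\|m\theta\| < 1/m$.

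To upgrade this to infinitely many such $m$, I would argue by contradiction. Suppose that only finitely many positive integers $m_1 < m_2 < \cdots < m_k$ satisfy $\|m\theta\| < 1/m$. Since $\theta$ is irrational, $m_j \theta$ is never an integer, so $\delta := \min_{1 \leq j \leq k} \|m_j \theta\| > 0$. Choose $N$ so large that $1/N < \delta$. Applying the pigeonhole construction above with this $N$ produces some $m \in \{1,\ldots,N\}$ with $\|m\theta\| < 1/N < \delta$. By definition of $\delta$, this $m$ cannot be any of the $m_j$, yet it satisfies $\|m\theta\| < 1/N \leq 1/m$, contradicting the assumed completeness of the list $m_1,\ldots,m_k$. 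Hence infinitely many $m$ must exist.

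The argument is essentially routine and there is no real obstacle; the only subtlety worth flagging is the use of irrationality of $\theta$ in the second step (to ensure $\|m_j\theta\| > 0$), without which the ``infinitely many'' conclusion would fail (for $\theta$ rational, only finitely many $m$ work). The pigeonhole step itself is standard and requires no additional machinery beyond the finiteness of the partition of $[0,1)$.
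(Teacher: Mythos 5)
Your proof is correct and is the canonical pigeonhole argument for Dirichlet's approximation theorem; the paper itself gives no proof, citing the result as a classical fact. Both the pigeonhole step and the bootstrap to infinitely many $m$ (using irrationality to guarantee $\delta = \min_j \|m_j\theta\| > 0$) are carried out correctly, so there is nothing to compare against and nothing to flag.
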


We also need the following conjectures in order to deduce Theorem \ref{Littlewoodtwoprime}.
\medskip

{\bf Littlewood's conjecture: } Let $\alpha,\beta$ be any two real numbers in $[0,1]$. Then by Dirichlet's theorem  and the fact that $\parallel\! x\!\parallel<1$ for any $x$, we have $\parallel\! n\alpha\!\parallel\!\cdot\!\parallel\! n\beta\!\parallel\leq n^{-1}$. A natural question is whether the statement remains valid if one replaces the right hand side of the inequality by $\epsilon n^{-1}$, for any arbitrary $\epsilon>0$. In this direction, Littlewood conjectured the following in the 1930's:

\begin{conjecture}[Littlewood's conjecture]\label{Littlewood}
Let $\alpha,\beta$ be any two real numbers in $[0,1]$. Then we have
$$
\liminf_{n\rightarrow\infty}n\!\parallel\! n\alpha\!\parallel\!\cdot\!\parallel\! n\beta\!\parallel=0.
$$
\end{conjecture}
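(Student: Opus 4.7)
The statement under consideration is Littlewood's celebrated conjecture, which has been open since the 1930s; I would not expect an actual proof, but can outline the reductions that eliminate the easy cases and describe the state-of-the-art strategy.

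First, I would dispose of the non-generic cases. If either $\alpha$ or $\beta$ lies in $\Bbb Q$, say $\alpha=p/q$, then $\parallel\! n\alpha\!\parallel=0$ along the arithmetic progression $n=qk$, so the liminf is trivially zero. More generally, if at least one of the two is not badly approximable, then by definition there is a sequence of positive integers $n_j\rightarrow\infty$ with $n_j\!\parallel\! n_j\alpha\!\parallel\rightarrow 0$, and combining with the trivial bound $\parallel\! n_j\beta\!\parallel\leq 1$ we conclude $n_j\!\parallel\! n_j\alpha\!\parallel\!\cdot\!\parallel\! n_j\beta\!\parallel\rightarrow 0$. Hence the only residual difficulty is the case where both $\alpha$ and $\beta$ are badly approximable irrationals.

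For the residual case I would pass to homogeneous dynamics on $X:=\mathrm{SL}_{3}(\Bbb R)/\mathrm{SL}_{3}(\Bbb Z)$, following the approach of Einsiedler, Katok and Lindenstrauss. Associate to $(\alpha,\beta)$ the unipotent matrix
$$
u(\alpha,\beta)=\begin{pmatrix} 1 & 0 & \alpha\\ 0 & 1 & \beta\\ 0 & 0 & 1 \end{pmatrix},
$$
and let $A$ be the two-parameter group of diagonal matrices of determinant one. A standard computation, applying Mahler's compactness criterion to the lattices $a\cdot u(\alpha,\beta)\Bbb Z^{3}$ as $a$ ranges over $A$, shows that
$$
\liminf_{n\rightarrow\infty} n\!\parallel\! n\alpha\!\parallel\!\cdot\!\parallel\! n\beta\!\parallel=0
$$
if and only if the $A$-orbit of the point $u(\alpha,\beta)\Bbb Z^{3}\in X$ is unbounded. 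The problem thus reduces to proving that every such orbit is unbounded, or equivalently that no $A$-invariant probability measure on $X$ is supported on the closure of such an orbit.

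The hard part, and the reason Littlewood's conjecture remains unproved, is the classification of these $A$-invariant measures. The Einsiedler-Katok-Lindenstrauss theorem establishes that every $A$-invariant, ergodic probability measure on $X$ with positive entropy under some one-parameter subgroup of $A$ is algebraic (homogeneous), and consequently that the set of exceptional pairs $(\alpha,\beta)$ has Hausdorff dimension zero. What is missing is the elimination of $A$-invariant measures of zero entropy, a higher-rank rigidity statement that is inaccessible to the low-entropy and joinings techniques available for rank-one diagonal actions. Barring a substantial new idea along these lines, my proposal can only recover the conclusion off a Hausdorff-dimension-zero exceptional set, and the full statement is correctly presented in the paper as Conjecture \ref{Littlewood} rather than as a theorem; accordingly, Theorem \ref{Littlewoodtwoprime} must be stated conditionally on it.
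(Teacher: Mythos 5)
You correctly recognize that this is an open conjecture, not a provable statement: the paper itself labels it Conjecture~\ref{Littlewood} and invokes it only as a hypothesis in Theorem~\ref{Littlewoodtwoprime}, so there is no proof in the paper to compare against. Your reduction of the problem (disposing of rationals, and of the case where at least one of $\alpha,\beta$ fails to be badly approximable, via the trivial bound $\parallel n\beta\parallel\leq 1/2$), and your summary of the Einsiedler--Katok--Lindenstrauss program via $A$-orbits on $\mathrm{SL}_{3}(\Bbb R)/\mathrm{SL}_{3}(\Bbb Z)$ and the measure-classification/entropy obstruction, are accurate as a description of the known partial results (Hausdorff dimension zero exceptional set) and of why the full conjecture remains out of reach. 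Nothing further is required here; the paper's usage is conditional and your assessment matches it.
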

The above conjecture lies very deep in the theory of homogeneous Diophantine approximation and still remains an outstanding open problem in its full generality.

Recently, in \cite[p. 2768]{Velani-Badziahin}, Velani and Badziahin proposed the following strengthening of Littlewood's conjecture.
\begin{conjecture}[Velani-Badziahin]\label{Velani-Badziahin}
Let $\alpha,\beta$ be any two real numbers. Then there exist infinitely many positive integers $n$ such that
$$
n\cdot\log n\cdot\parallel\! n\alpha\!\parallel\!\cdot\!\parallel\! n\beta\!\parallel\ll 1.
$$
\end{conjecture}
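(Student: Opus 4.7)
The plan is to reduce the statement to a two-dimensional Diophantine approximation problem about the Hecke angles $\theta_{\mathfrak{p}}/\pi$ and $\theta_{\mathfrak{q}}/\pi$, and then invoke Conjectures \ref{Littlewood} and \ref{Velani-Badziahin} respectively. The starting point is the fact that $\mathfrak{p},\mathfrak{q}$ are distinct prime ideals, so by the multiplicativity of $c_{\mathbf{f}}$ in \eqref{multiplicative} one has
\[
c_{\mathbf{f}}(\mathfrak{p}^{n}\mathfrak{q}^{n}) \;=\; c_{\mathbf{f}}(\mathfrak{p}^{n})\, c_{\mathbf{f}}(\mathfrak{q}^{n}),
\]
and then from \eqref{cfpmexpression} one gets
\[
c_{\mathbf{f}}(\mathfrak{p}^{n}\mathfrak{q}^{n}) \;=\; \frac{\sin((n+1)\theta_{\mathfrak{p}})\,\sin((n+1)\theta_{\mathfrak{q}})}{\sin\theta_{\mathfrak{p}}\,\sin\theta_{\mathfrak{q}}}.
\]
Using the standard estimate $|\sin(\pi x)| \le \pi\,\|x\|$, with $\alpha := \theta_{\mathfrak{p}}/\pi$ and $\beta := \theta_{\mathfrak{q}}/\pi$, this yields
\[
|c_{\mathbf{f}}(\mathfrak{p}^{n}\mathfrak{q}^{n})| \;\le\; \frac{\pi^{2}}{|\sin\theta_{\mathfrak{p}}|\,|\sin\theta_{\mathfrak{q}}|}\cdot \|(n+1)\alpha\|\cdot\|(n+1)\beta\|.
\]

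Next I would choose the set $\wp$. By Lemma \ref{thetapnotrational}, the set of prime ideals $\mathfrak{p}$ with $\theta_{\mathfrak{p}}/2\pi$ (equivalently $\theta_{\mathfrak{p}}/\pi$) rational is finite; let $\wp$ consist of the remaining primes (those with $\theta_{\mathfrak{p}}/\pi \in (0,1)$ irrational), which has relative density $1$. For any pair $\mathfrak{p}\neq\mathfrak{q}$ in $\wp$, both $\alpha,\beta$ are irrational in $[0,1]$, so $\sin((n+1)\theta_{\mathfrak{p}})$ and $\sin((n+1)\theta_{\mathfrak{q}})$ are nonzero for every $n\ge 1$; this gives the non-vanishing statement $c_{\mathbf{f}}(\mathfrak{p}^{n}\mathfrak{q}^{n})\ne 0$. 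Moreover $|\sin\theta_{\mathfrak{p}}|,|\sin\theta_{\mathfrak{q}}|>0$, so the prefactor in the displayed inequality is a finite constant $C_{\mathfrak{p},\mathfrak{q}}$ depending only on $\mathbf{f}$ and the pair.

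Multiplying the inequality above by $(n+1)$ and then by $\log(n+1)$, we obtain
\[
(n+1)\,|c_{\mathbf{f}}(\mathfrak{p}^{n}\mathfrak{q}^{n})| \;\le\; C_{\mathfrak{p},\mathfrak{q}}\cdot(n+1)\,\|(n+1)\alpha\|\,\|(n+1)\beta\|,
\]
\[
(n+1)\log(n+1)\,|c_{\mathbf{f}}(\mathfrak{p}^{n}\mathfrak{q}^{n})| \;\le\; C_{\mathfrak{p},\mathfrak{q}}\cdot(n+1)\log(n+1)\,\|(n+1)\alpha\|\,\|(n+1)\beta\|.
\]
Taking $\liminf$ as $n\to\infty$ and substituting $m=n+1$, Conjecture \ref{Littlewood} applied to $\alpha,\beta\in[0,1]$ forces the right-hand side of the first line to have liminf equal to $0$, yielding the first claim; Conjecture \ref{Velani-Badziahin} forces the right-hand side of the second line to have a finite liminf, yielding the second claim.

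There is essentially no deep obstacle once the setup is correct: the only point that requires a small amount of care is ensuring that both Hecke angles are irrational so that the Diophantine conjectures can be applied and the non-vanishing holds — this is handled cleanly by Lemma \ref{thetapnotrational}, which keeps the exceptional set of primes finite and hence preserves relative density $1$ for $\wp$. The rest of the argument is a direct translation of the conjectural Diophantine inequalities for $\|m\alpha\|\,\|m\beta\|$ into the desired bound on $|c_{\mathbf{f}}(\mathfrak{p}^n\mathfrak{q}^n)|$ via the identity \eqref{cfpmexpression}.
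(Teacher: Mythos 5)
The statement you were handed is Conjecture~\ref{Velani-Badziahin}, which is an \emph{open conjecture} of Badziahin and Velani cited from the literature --- the paper offers no proof of it, nor could it. What you have actually written is a proof of Theorem~\ref{Littlewoodtwoprime}, which \emph{assumes} Conjectures~\ref{Littlewood} and \ref{Velani-Badziahin} as hypotheses. So as an answer to the literal prompt, the proposal does not address the statement; there is nothing to prove there.

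Read instead as a proof of Theorem~\ref{Littlewoodtwoprime}, your argument is essentially identical to the paper's: use Lemma~\ref{thetapnotrational} to restrict to primes with irrational Hecke angle, invoke multiplicativity \eqref{multiplicative} and the closed form \eqref{cfpmexpression} to write $c_{\mathbf{f}}(\mathfrak{p}^n\mathfrak{q}^n)$ as a product of sines, bound $|\sin|$ by a constant times the distance-to-nearest-integer, and feed the resulting product $\|(n+1)\alpha\|\cdot\|(n+1)\beta\|$ into the two Diophantine conjectures. The only cosmetic difference is that you take $\alpha=\theta_{\mathfrak{p}}/\pi$ while the paper takes $\alpha=\theta_{\mathfrak{p}}/(2\pi)$; both are valid irrational inputs. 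One small inaccuracy worth flagging: you assert that the set of primes $\mathfrak{p}$ with $\theta_{\mathfrak{p}}/2\pi$ rational is \emph{finite}. Lemma~\ref{thetapnotrational} only shows that the set of rational \emph{values} attained is finite; combined with the Sato--Tate theorem (whose limiting measure has no atoms), this gives that the set of such primes has relative density $0$, not that it is finite. The conclusion that $\wp$ has relative density $1$ is unaffected, but the justification should go through density, exactly as the paper phrases it.
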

\subsection{Height function}\label{heightfunction} In this subsection, we define the height function on number fields which we need to discuss certain results in Remark \ref{rmkafterprf}. Let $K$ be any number field with $M_{K}$ the set of all inequivalent absolute values where each place $v$ is normalized so that the product formula holds. More precisely, for a finite place $\mathfrak{p}$ of $K$, we define the absolute value of $x\in K^{\times}$
$$
|x|_{\mathfrak{p}}={\N}(\mathfrak{p})^{-{\rm ord}_{\mathfrak{p}}(x)}
$$ 
where $x\mathcal{O}_{K}=\prod_{\mathfrak{p}}\mathfrak{p}^{{\rm ord}_{\mathfrak{p}}(x)}$ is the decomposition of the ideal $x\mathcal{O}_{K}$ into product of maximal ideals. We define ${\rm ord}_{\mathfrak{p}}(0)=\infty$.
For an infinite place $\mathfrak{p}_{\infty}$, there exists an embedding $\sigma:K\hookrightarrow\Bbb C$ such that for any $x\in K$
$$
|x|_{\mathfrak{p}_{\infty}}=|\sigma(x)|^{\frac{n_{\sigma}}{[K:\Bbb Q]}}
$$
where $n_{\sigma}$ is the local degree which equals $1$ or $2$ depending on whether $\sigma$ is real or complex embedding. If $x$ is contained in a number field $K$, then one defines its Weil height as 
$$
H(x)=\prod_{v\in M_{K}}\max\{1, |x|_{v}\}.
$$

\section{Proof of Theorem~\ref{Hilhalfepsilon}}
Let $\epsilon>0$ be fixed. We denote by 
$$
\mathcal{N}_{\epsilon}(x):=\#\{\mathfrak{n}\in\mathcal{I}:{\N}(\mathfrak{n})\leq x\text{ \rm and }|c_{\bf f}(\mathfrak{n})|>(\log {\N}(\mathfrak{n}))^{-1/2+\epsilon}\}.
$$
It follows from \eqref{sumcfngama} of Lemma \ref{momentestimate} that
$$
\mathcal{N}_{\epsilon}(x)\leq \sum_{{\N}(\mathfrak{n})\leq x}|c_{\bf f}(\mathfrak{n})|^{\gamma}\cdot (\log {\N}(\mathfrak{n}))^{\gamma/2-\gamma\epsilon}\ll x(\log x)^{-\gamma\epsilon+c\gamma^{2}},
$$
where $c>1$ is an absolute constant. Choosing $\gamma=\epsilon/(2c)$, we get
$$
\mathcal{N}_{\epsilon}(x)\leq x(\log x)^{-\epsilon^{2}/2}=o(x) \ \ \ \ \text{\rm as }x\rightarrow\infty.
$$
Hence, for any $\epsilon>0$, we see that the natural density of the set $\mathcal{N}_{\epsilon}(x)$ is 0. This completes proof of the theorem. 
\section{Proof of Theorem \ref{infiniteDiophantine}}

The proof rests on the ideas stem from \cite{Alkan-Ford-Zaharescu}.
For a prime ideal $\mathfrak{p}$ such that $\frac{\theta_{\mathfrak{p}}}{2\pi}$ is irrational,  it follows from Lemma \ref{Dirichlet} that the set $\{r\theta_{\mathfrak{p}}\}_{r\in\Bbb N}$ is a dense subset of the set of real numbers. Therefore the numbers 
$$
c_{\bf f}(\mathfrak{p}^{r})=\frac{\sin (r+1)\theta_{\mathfrak{p}}}{\sin \theta_{\mathfrak{p}}}.
$$
are dense in the interval $[-\frac{1}{\sin \theta_{\mathfrak{p}}},\frac{1}{\sin \theta_{\mathfrak{p}}}]$.
Therefore there exists a positive integer $r_{1}$ such that
$$
c_{\bf f}(\mathfrak{p}^{r_{1}}) > 0.
$$
Let $\mathfrak{q}$ be a prime ideal distinct from $\mathfrak{p}$ such that $\frac{\theta_{\mathfrak{q}}}{2\pi}$ is irrational. For any $n\geq1$, we work with the ideal $\mathfrak{a}_{n}=\mathfrak{p}^{r_{1}}\mathfrak{q}^{n-1}$ and we have $c_{\bf f}(\mathfrak{a}_{n})=c_{\bf f}(\mathfrak{p}^{r_{1}})c_{\bf f}(\mathfrak{q}^{n-{1}})$. Let 
$$
\delta=\frac{c_{\bf f}(\mathfrak{p}^{r_{1}})}{\sin \theta_{\mathfrak{q}}}.
$$
Then for any $n\geq 1$, we have $c_{\bf f}(\mathfrak{a}_{n})=\delta\sin n\theta_{\mathfrak{q}}$. 
By Lemma \ref{Dirichlet}, there exists infinitely many $n$ for which 
$$
\bigg\Vert n\frac{\theta_{\mathfrak{q}}}{2\pi}\bigg\Vert=\bigg\vert n\frac{\theta_{\mathfrak{q}}}{2\pi}-\bigg[n\frac{\theta_{\mathfrak{q}}}{2\pi}\bigg]\bigg\vert\leq \frac{1}{n},
$$
where $[\cdot]$ is either the floor or the ceiling part. Thus for each $n$ as above, we have 
$$
|c_{\bf f}(\mathfrak{a}_{n})|=|\delta| |\sin n\theta_{\mathfrak{p}}|<\frac{2\pi|\delta|}{n}.
$$
Now using the relation $\mathfrak{a}_{n}=\mathfrak{p}^{r_{1}}\mathfrak{q}^{n-1}$, we see that $n\geq c\log{\N}(\mathfrak{a}_{n})$, where $c>0$ is a constant depending on prime ideals $\mathfrak{p},\mathfrak{q}$ and number $r_{1}$. Since these prime ideals and numbers are fixed and depend only on the form $\bf f$. Thus we deduce that
$$
|c_{{\bf f}}(\mathfrak{a}_{n})|\leq\frac{\Lambda_{{\bf f}}}{\log{\N}(\mathfrak{a}_{n})}
$$
holds for infinitely many positive integers $n$ and hence for infinitely many integral ideals $\mathfrak{a}_{n}$. This completes proof of the theorem. \qed
\section{Proof of Theorem \ref{primepowerdiophantine}}
It follows from Lemma \ref{thetapnotrational} that there exist a set $\mathcal{S}$ of prime ideals with relative density $1$ such that for each prime ideal $\mathfrak{p}\in\mathcal{S}$, we have $\theta_{\mathfrak{p}}/(2\pi)$ is irrational.  
%Let $\delta$ be the angle $0\leq\delta<2\pi$ such that $\sin\delta=x\sin\theta_{\mathfrak{p}}$. 
By Lemma \ref{Dirichlet}, there exists infinitely many positive integers $n$ such that 
$$
\bigg\Vert (n+1)\frac{\theta_{\mathfrak{p}}}{2\pi}\bigg\Vert=\bigg\vert (n+1)\frac{\theta_{\mathfrak{p}}}{2\pi}-\bigg[(n+1)\frac{\theta_{\mathfrak{p}}}{2\pi}\bigg]\bigg\vert\leq \frac{1}{(n+1)},
$$
where $[\cdot]$ is either the floor or the ceiling part. Thus for each $n$ as above, we have 
$$
|\sin (n+1)\theta_{\mathfrak{p}}|<\frac{2\pi}{(n+1)}.
$$
Hence by \eqref{cfpmexpression}, we have
$$
|c_{{\bf f}}(\mathfrak{p}^{n})|\leq\frac{\Lambda_{{\bf f}}}{(n+1)},
$$
where $\Lambda_{{\bf f}}=2\pi/ |\sin\theta_{\mathfrak{p}}|$. This proves the theorem. \qed

\section{Proof of Theorem \ref{Littlewoodtwoprime}}
It follows from Lemma \ref{thetapnotrational} that there exist a set $\mathcal{S}$ of prime ideals with relative density $1$ such that for each prime ideal $\mathfrak{p}\in\mathcal{S}$, we have $\theta_{\mathfrak{p}}/(2\pi)$ is irrational.. For any two distinct prime ideals $\mathfrak{p}$ and $\mathfrak{q}$ in $\mathcal{S}$, we have
$$
(n+1)|\sin (n+1)\theta_{\mathfrak{p}}|\leq(2\pi)(n+1)\bigg\Vert (n+1)\frac{\theta_{\mathfrak{p}}}{2\pi}\bigg\Vert
$$
and 
$$
|\sin (n+1)\theta_{\mathfrak{q}}|\leq(2\pi)\bigg\Vert (n+1)\frac{\theta_{\mathfrak{q}}}{2\pi}\bigg\Vert.
$$
Using \eqref{cfpmexpression}, we get from above,
$$
(n+1)|c_{{\bf f}}(\mathfrak{p}^{n})|\leq\frac{(2\pi)(n+1)}{|\sin \theta_{\mathfrak{p}}|}\bigg\Vert (n+1)\frac{\theta_{\mathfrak{p}}}{2\pi}\bigg\Vert
$$
and
$$
|c_{{\bf f}}(\mathfrak{q}^{n})|\leq\frac{2\pi}{|\sin \theta_{\mathfrak{q}}|}\bigg\Vert (n+1)\frac{\theta_{\mathfrak{q}}}{2\pi}\bigg\Vert.
$$
We deduce from above and the multiplicativity of $c_{{\bf f}}(\mathfrak{m})$ from \eqref{multiplicative} that
$$
(n+1)|c_{{\bf f}}(\mathfrak{p}^{n}\mathfrak{q}^{n})|\leq\frac{(4\pi)^{2}(n+1)}{|\sin \theta_{\mathfrak{p}}|\cdot|\sin \theta_{\mathfrak{q}}|}\bigg\Vert (n+1)\frac{\theta_{\mathfrak{p}}}{2\pi}\bigg\Vert\cdot\bigg\Vert (n+1)\frac{\theta_{\mathfrak{q}}}{2\pi}\bigg\Vert.
$$
Now the first part of Theorem \ref{Littlewoodtwoprime} follows from Conjecture \ref{Littlewood}. On the other hand, Conjecture \ref{Velani-Badziahin} concludes proof of the second part of Theorem \ref{Littlewoodtwoprime}.

\section{Proof of Theorem \ref{primepowerlowerthm}}
\begin{proof} Since $\theta_{\mathfrak{p}}/\pi$ is badly approximable, by definition, there exists a constant $\eta_{{\bf f}}>0$ such that 
\begin{equation}\label{thetapbadapprox}
\bigg\Vert (n+1)\frac{\theta_{\mathfrak{p}}}{\pi}\bigg\Vert >\frac{\eta_{{\bf f}}}{n+1}
\end{equation} 
for all positive integers $n$. Without loss of generality, we take $\eta_{{\bf f}}<1/4$. We write
$$
\gamma=(n+1)\frac{\theta_{\mathfrak{p}}}{\pi}-\bigg[(n+1)\frac{\theta_{\mathfrak{p}}}{\pi}\bigg]
$$
where $[\cdot]$ is either the floor or the ceiling part so that $-1/2\leq\gamma\leq1/2$. Therefore we have
\begin{equation}\label{sinpigamma}
\sin \pi\gamma=(-1)^{\big[(n+1)\frac{\theta_{\mathfrak{p}}}{\pi}\big]}\sin(n+1)\theta_{\mathfrak{p}}.
\end{equation}
Also from \eqref{thetapbadapprox}, we have
$$
\frac{\eta_{{\bf f}}\pi}{n+1}<|\pi\gamma|\leq\frac{\pi}{2}.
$$
Thus from the above, we have
$$
|\sin \pi\gamma|>\bigg\vert\sin\left(\frac{\eta_{{\bf f}}\pi}{n+1}\right)\bigg\vert.
$$
Using \eqref{sinpigamma} and above, we get
$$
|\sin(n+1)\theta_{\mathfrak{p}}|>\bigg\vert\sin\left(\frac{\eta_{{\bf f}}\pi}{n+1}\right)\bigg\vert.
$$
Since $|\sin x|\geq \frac{2}{\pi} |x|$ for $|x|\leq \frac{\pi}{2}$, we get from the above that
$$
|\sin(n+1)\theta_{\mathfrak{p}}|> 2\frac{\eta_{{\bf f}}}{n+1}.
$$ 
Finally using \eqref{cfpmexpression} in the above, we deduce
$$
|c_{{\bf f}}(\mathfrak{p}^{n})|>2\frac{\eta_{{\bf f}}}{n+1}.
$$
This completes proof of the theorem.
\end{proof}
\begin{remark}\label{rmkafterprf}	
\begin{enumerate}
 \item [{\bf (1).}] {\rm It would be an interesting question to investigate the Hecke eigen angles for which
$\mathfrak{\theta_{\mathfrak{p}}/\pi}$ is badly approximable. In fact, in \cite{Bengoechea}, It is raised (for $m=1$) whether the angles $\mathfrak{\theta_{\mathfrak{p}}/\pi}$ that are badly approximable are dense in the interval $[0,1]$. The distribution of such angles  have connections towards the refined rate of Diophantine approximation of real numbers by Fourier coefficients of cusp forms (see \cite{Bengoechea}). 
\item [{\bf (2).}] 
In \cite[Lemma 2, p. 393]{Murty-Murty-Shorey} and more generally in \cite[Proposition 2, p. 461]{Murty-Murty}, a lower bound is obtained for $|C_{f}(p^{n})|$ for any primitive form $f$ of weight $k\geq 4$ and level $N$ (when $m$=1) where $p$ is any prime and $n\geq 2$. In particular for $n$ even, it gives 
$$
|C_{f}(p^{n})|\geq p^{{\frac{k-1}{2}(n-c_{1}\log n)}}
$$
where $c_{1}>0$ is an effectively computable constant.
However if the angle $\mathfrak{\theta_{\mathfrak{p}}/\pi}$ is badly approximable then Theorem \ref{primepowerlowerthm} provides an improved bound for the Fourier coefficients $C_{{\bf f}}(\mathfrak{p}^{n})$ with $n\geq 1$ namely
$$
|C_{{\bf f}}(\mathfrak{p}^{n})|>\frac{2\eta_{{\bf f}}}{n+1}{\N}(\mathfrak{p}^{n})^{\frac{k_{0}-1}{2}}.
$$
\item [{\bf(3).}] Recently in \cite{Bhand-Gun-Rath}, the lower bound of $|C_{f}(p^{n})|$ have been generalized to Hilbert cusp forms and a bound for the height of $C_{{\bf f}}(\mathfrak{p}^{n})$ is obtained. More precisely, let $F\subset\Bbb R$ be  a finite Galois extension of $\Bbb Q$ of odd degree $m$. Let $\bf f$ be a primitive cusp form in $\mathscr{F}_{k}(\mathfrak{n})$ of parallel weight $k=(k,\ldots,k)$ such that $k\equiv 0\pmod{2}$.Then \cite[Theorem 2]{Bhand-Gun-Rath} will imply that for each $n\geq 6$ and for all but finitely many prime ideals $\mathfrak{p}$, we have 
$$
H(C_{{\bf f}}(\mathfrak{p}^{n}))\gg {\N}(\mathfrak{p})^{\delta_{1}}
$$
where $H(\cdot)$ is as defined in subsection \ref{heightfunction} and $\delta_{1}$ is an effectively computable constant which depends on the number field $\Bbb Q(\bf f)$ which is generated over $\Bbb Q$ by $C_{{\bf f}}(\mathfrak{m})$, for all $\mathfrak{m}$ (see \cite[Proposition 2.8, p. 654]{Shimura}) but independent of $\mathfrak{p}$ and $n$. 

However, it is possible to obtain a bound on the height of $C_{{\bf f}}(\mathfrak{p}^{n})$ uniformly for each $n\geq 1$ without any restriction on the number field $F$ and weight $k$ of the form $\bf f$ provided the angle $\theta_{\mathfrak{p}}/\pi$ is badly approximable. In this case, it follows from Theorem \ref{primepowerlowerthm} that 
$$
H(C_{{\bf f}}(\mathfrak{p}^{n}))\geq \left(2\frac{\eta_{{\bf f}}}{n+1}\right)^{1/d} {\N}(\mathfrak{p}^{n})^{\frac{k_{0}-1}{2d}}
$$
where $d:=[\Bbb Q({\bf f}):\Bbb Q]$.
Hence we deduce from the above that
$$
H(C_{{\bf f}}(\mathfrak{p}^{n}))\gg {\N}(\mathfrak{p}^{n})^{\frac{k_{0}-1-\epsilon}{2d}}\geq{\N}(\mathfrak{p})^{\frac{k_{0}-1-\epsilon}{2d}}.
$$
\item [{\bf(4).}] When $m=1$, a deep conjecture of Atkin and Serre \cite{Atkin-Serre} asserts that for $k\geq 4$ and all $\epsilon>0$, there exist constants $\gamma_{\epsilon, f}>0$ and $\gamma^{'}_{\epsilon, f}>0$ (Here ${\bf f}:=f$ is an elliptic cusp form) such that if $p>\gamma^{'}_{\epsilon, f}$, then 
\begin{equation*}
|C_{f}(p)|\geq \gamma_{\epsilon, f}\ p^{\frac{k-3}{2}-\epsilon}.
\end{equation*}
However, Theorem \ref{primepowerlowerthm} provides an improved lower bound for the coefficients $C_{{\bf f}}(\mathfrak{p})$ under certain assumptions. More precisely,  if the angle $\theta_{\mathfrak{p}}/\pi$ is badly approximble, then we have
\begin{equation*}
|C_{{\bf f}}(\mathfrak{p})|\geq \eta_{{\bf f}}{\N}(\mathfrak{p})^{\frac{k_{0}-1}{2}}.
\end{equation*}
}
\end{enumerate}
\end{remark}
\section*{Acknowledgements}
The author sincerely thanks Professor Florian Luca and Professor Igor E. Shparlinski for the helpful comments over an email conversation.

\end{document}